\newtheorem{lema}{Lemma}[section]
\newtheorem{theo}[lema]{Theorem}
\newcounter{teoremaganso}
\newtheorem {bigtheo} [teoremaganso] {Theorem}
\newtheorem{prop}[lema]{Proposition}
\theoremstyle{definition}
\newtheorem{defi}[lema]{Definition}
\theoremstyle{remark}
\newtheorem{rema}[lema]{Remark}
\newcommand{\disc}{\operatorname{disc}}
\newcommand{\res}{\operatorname{res}}
\newcommand{\atan}{\operatorname{atan}}
\newcommand{\Var}{\mathcal{V}}
\def\sideremark#1{\ifvmode\leavevmode\fi\vadjust{\vbox to0pt{\vss 
      \hbox to 0pt{\hskip\hsize\hskip1em           
 \vbox{\hsize2cm\tiny\raggedright\pretolerance10000
 \noindent #1\hfill}\hss}\vbox to8pt{\vfil}\vss}}}%
\title[Uniqueness of Limit Cycles for Quadratic Vector Fields
]{
Uniqueness of Limit Cycles for Quadratic Vector Fields
}
\author{J.L. Bravo, M. Fern\'{a}ndez, I. Ojeda, F. S\'{a}nchez}
\address{J.L. Bravo, Departamento de Matem\'{a}ticas,
Universidad de Extremadura, 06006 Badajoz, Spain}
\email{trinidad@unex.es}
\address{M. Fern\'{a}ndez, Departamento de Matem\'{a}ticas,
Universidad de Extremadura, 06006 Badajoz, Spain}
\email{ghierro@unex.es}
\address{I. Ojeda, Departamento de Matem\'{a}ticas,
Universidad de Extremadura, 06006 Badajoz, Spain}
\email{ojedamc@unex.es}
\address{F.S\'{a}nchez, Departamento de Matem\'{a}ticas,
	Universidad de Extremadura, 06006 Badajoz, Spain}
\email{fsanchez@unex.es}
\subjclass[2010]{Primary 34C25. Secondary: 34A34,  37C27, 37G15.}
\keywords{Abel equation, closed solution, periodic solution, limit cycle}
\begin{document}

\begin{abstract}
This article deals with the study of the number of limit cycles surrounding a critical 
point of a quadratic planar vector field, which, in normal form, can be
written as $x'= a_1 x-y-a_3x^2+(2 a_2+a_5)xy + a_6 y^2$, 
$y'= x+a_1 y + a_2x^2+(2 a_3+a_4)xy -a_2y^2$. In particular, we study the semi-varieties defined
in terms of the parameters $a_1,a_2,\ldots,a_6$ where some classical criteria for 
the associated Abel equation apply. The proofs will combine classical ideas
with tools from computational algebraic geometry.
\end{abstract}

\maketitle

\section{Introduction and main results}

\noindent
The number of periodic solutions of a quadratic polynomial planar system is an open problem and the first non-trivial case of the second part of Hilbert's {\it XVI}-th problem. 

It is known that if a quadratic system has a limit cycle, i.e., a  periodic solution that is isolated in the set of periodic solutions of the system, then it must surround a focus of the system. In particular, if one takes the focus to be
at the origin, then the system can be written in the form (see \cite{Bautin})

\begin{equation}\label{system:quadratic}
	\begin{split}
x'&= a_1 x-y-a_3x^2+(2 a_2+a_5)xy + a_6 y^2, \\	
y'&= x+a_1 y + a_2x^2+(2 a_3+a_4)xy -a_2y^2.
\end{split}
\end{equation}	
One way to study the periodic solutions of \eqref{system:quadratic} is to analyse the $2\pi$-periodic positive solutions of the polar equation
 \begin{equation} \label{eq:polar}
 \frac{dr}{d\theta}=\frac{a_1r+f(\theta)r^2}{1+g(\theta)r},
\end{equation} 
 where
 $f$ and $g$ are the cubic homogeneous trigonometric polynomials defined by
 \begin{align*}
f(\theta)&= -a_3 \cos^3\theta+(3a_2+a_5) \cos^2\theta\sin\theta\\
& \phantom{=}+(2a_3+a_4+a_6)\cos\theta\sin^2\theta-a_2 \sin^3\theta,\\
g(\theta) &= a_2 \cos^3\theta+(3a_3+a_4)\cos^2\theta\sin\theta\\
&\phantom{=}-(3a_2+a_5)\cos\theta \sin^2\theta - a_6\sin^3\theta,	
 \end{align*}	
or of the Cherkas-equivalent Abel differential equation (see \cite{Cher})
 \begin{equation}\label{eq:Abel}
 \rho'=A(\theta)\rho^3 + B(\theta)\rho^2 + a_1 \rho,	
 \end{equation}	
where
\[
A(\theta)=g(\theta)(a_1 g(\theta)-f(\theta)),\quad B(\theta)=f(\theta)-2 a_1 g(\theta)-g'(\theta).
\]	
There are several results that establish upper bounds for the number of limit cycles of~\eqref{eq:Abel}.
The best known ones impose the condition that one of the functions $A$ or $B$ has definite sign, see  \cite{GG,GL,Ll,Panov, Pliss}, where
a $2\pi$-periodic function $F(\theta)$ has definite sign if $F(\theta)\geq 0$ for all $\theta \in[0,2\pi]$ or 
$F(\theta)\leq 0$ for all $\theta \in[0,2\pi]$.
  
In the particular case of Equation \eqref{eq:Abel}, the criteria in
\cite{GL, Pliss} give the following result.
\begin{theo}[\cite{GL,Pliss}]\label{teo:definite-sign}
If $A$ or $B$ has definite sign, 
then  Abel equation \eqref{eq:Abel} has at most one positive limit cycle.
\end{theo}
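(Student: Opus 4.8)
The plan is to prove the equivalent statement that \eqref{eq:Abel} has at most one positive $2\pi$-periodic solution coming from a limit cycle of \eqref{system:quadratic}; this is what is needed, since $\rho\equiv 0$ is itself a solution of \eqref{eq:Abel}. Three structural features of \eqref{eq:Abel} will be used: $\rho\equiv 0$ is a solution; because $f$ and $g$ are homogeneous cubic trigonometric polynomials one has $\int_0^{2\pi}f=\int_0^{2\pi}g=\int_0^{2\pi}g'=0$, so $\int_0^{2\pi}B\,d\theta=0$; and a $2\pi$-periodic $\rho$ associated with a limit cycle of \eqref{system:quadratic} obeys the admissibility inequality $1-g(\theta)\rho(\theta)>0$ for all $\theta$, since this quantity equals $1/(1+g r)$ along the corresponding solution $r$ of \eqref{eq:polar} and equals $1$ at the (necessarily existing) zeros of $g$.

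The first step is to dispose of the degenerate cases. If $B$ has definite sign, then $\int_0^{2\pi}B=0$ forces $B\equiv 0$, and \eqref{eq:Abel} becomes the Bernoulli equation $\rho'=A(\theta)\rho^3+a_1\rho$, which the substitution $\sigma=\rho^{-2}$ linearizes to $\sigma'=-2A(\theta)-2a_1\sigma$. If instead $A\equiv 0$, then \eqref{eq:Abel} is the Riccati equation $\rho'=B(\theta)\rho^2+a_1\rho$, which $\sigma=\rho^{-1}$ linearizes to $\sigma'=-B(\theta)-a_1\sigma$. In both cases the resulting linear equation has a unique $2\pi$-periodic solution when $a_1\neq 0$, and when $a_1=0$ has either no $2\pi$-periodic solution or (when the relevant mean vanishes) a whole continuum of them; in all cases \eqref{eq:Abel} has at most one positive limit cycle. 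There remains the case $A\not\equiv 0$ of definite sign — in which $B\not\equiv 0$, by the second structural feature — and one may assume $A\geq 0$, the opposite sign being symmetric.

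For that case I would first recover the classical bound behind \cite{GL,Pliss}. Writing $P(c)=\rho(2\pi;c)$ for the Poincar\'e map of \eqref{eq:Abel} and $\psi(c)=\log P'(c)=\int_0^{2\pi}(3A\rho^2+2B\rho+a_1)\,d\theta$ (the integrand evaluated along $\rho(\cdot;c)$), differentiating twice and using the first two variational equations yields the identity $\psi''(c)=6\int_0^{2\pi}A(\theta)\,(\partial_c\rho(\theta;c))^2\,d\theta+\tfrac12\,\psi'(c)^2$; so $A\geq 0$, $A\not\equiv 0$, makes $\psi$ strictly convex, hence $\psi$ has at most two zeros, hence by Rolle's theorem applied to $P(c)-c$ the map $P$ has at most three fixed points. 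Since $\rho\equiv 0$ is one of them, \eqref{eq:Abel} has at most two positive $2\pi$-periodic solutions. To pass from two to one, suppose for contradiction that $0<\rho_1(\theta)<\rho_2(\theta)$ are two positive limit cycles. Then $\rho_1$, $\rho_2$ and $\rho_2-\rho_1$ are positive $2\pi$-periodic functions, so taking logarithmic derivatives and integrating over $[0,2\pi]$ gives $\int_0^{2\pi}\rho_i\,(A(\rho_1+\rho_2)+B)\,d\theta=0$ for $i=1,2$ and $\int_0^{2\pi}A\,\rho_1\rho_2\,d\theta=2\pi a_1$, while the strict convexity of $\psi$ fixes the signs of $\psi(0)$, $\psi(\rho_1(0))$, $\psi(\rho_2(0))$. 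These relations, together with the admissibility inequalities $1-g\rho_i>0$ and the identities $A=g(a_1g-f)$ and $B=f-2a_1g-g'$, should force a contradiction.

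That last step is where I expect the real difficulty to lie. For an arbitrary Abel equation with cubic coefficient of definite sign the bound ``at most two positive periodic solutions'' is sharp — for instance $\rho'=\rho^3-3\rho^2+2\rho$ has the positive $2\pi$-periodic solutions $\rho\equiv 1$ and $\rho\equiv 2$ — so the passage from two to one must genuinely exploit the special form of $A$ and $B$ in \eqref{eq:Abel}, presumably through the vanishing means of $f$ and $g$ together with the admissibility constraint. The remaining ingredients — the Bernoulli and Riccati reductions, the convexity identity, and the integral identities — are routine; the delicate point is converting those identities into a sign contradiction incompatible with $A$ being of definite sign.
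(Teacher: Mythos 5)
Your degenerate cases are handled correctly, and your convexity identity for $\psi=\log P'$ is valid (the second variational equation does give $\psi''(c)=6\int_0^{2\pi}A\,(\partial_c\rho)^2\,d\theta+\tfrac12\psi'(c)^2$), so you correctly recover the classical Pliss--Gasull--Llibre bound of at most three $2\pi$-periodic solutions when $A$ has definite sign. But the decisive step, passing from ``at most two positive limit cycles'' to ``at most one'', is precisely the point you leave open (``should force a contradiction''), so the argument is genuinely incomplete. Moreover, the route you sketch is off target in two ways: the admissibility inequality $1-g\rho>0$ restricts attention to periodic solutions coming from limit cycles of \eqref{system:quadratic}, which proves a weaker statement than the theorem (which concerns all positive limit cycles of \eqref{eq:Abel}); and the integral identities you list are not what closes the gap.

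The missing idea is a symmetry of this particular equation. Since $f$ and $g$ are homogeneous trigonometric polynomials of degree $3$, the coefficients $A=g(a_1g-f)$ and $B=f-2a_1g-g'$ are homogeneous of degrees $6$ and $3$, hence $A(\theta+\pi)=A(\theta)$ and $B(\theta+\pi)=-B(\theta)$. Consequently $\rho(\theta)$ solves \eqref{eq:Abel} if and only if $-\rho(\theta+\pi)$ does, and this involution exchanges positive and negative limit cycles, so their numbers coincide. Two positive limit cycles would then produce, together with their two negative images and the solution $\rho\equiv0$, more periodic solutions than the bound of three allows (Pliss's theorem, or your own convexity bound) -- a contradiction; hence at most one positive limit cycle. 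This is exactly how the paper argues, with no admissibility constraint or integral identities. Your sharpness example $\rho'=\rho^3-3\rho^2+2\rho$ is consistent with this: it violates precisely the antisymmetry $B(\theta+\pi)=-B(\theta)$, which is the structural feature you did not exploit. Incidentally, the same antisymmetry is what the paper uses to force $B\equiv0$ when $B$ has definite sign, in place of your (equally valid) zero-mean argument.
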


In \cite{CGLl} the quadratic systems for which the above criteria applies are described taking into account their number of critical points and the directions $\theta$ in which $g(\theta)=0$.

To establish our main results, which determine the semi-varieties in the space of parameters where the above criteria apply, and, as a consequence, to obtain that at most one limit cycle surrounds the origin of \eqref{system:quadratic}, we shall need the following notation. 

\medskip

The study of whether $A$ has definite sign can,
by the  change of variable $x=\tan(\theta)$ (see Section~\ref{sec:main}), be reduced to the study of the common roots of the polynomials $p_1(x)$ and $p_3(x):=a_1p_1(x)-p_2(x)$, where
\[\begin{split}
p_1(x)&=a_2+(3 a_3+a_4) x-(3 a_2+a_5) x^2-a_6 x^3,\\
p_2(x)&=- a_3 + (3 a_2 + a_5) x + (2 a_3 + a_4 + a_6) x^2 - a_2  x^3.
\end{split}\]

\medskip

Let us denote by $D_1,D_3,D_1',D_3'$ the discriminants of the polynomials $p_1,p_3,p_1',p'_3$, respectively.  
If $\res(p_1,p_3)$ denotes the resultant of $p_1$ and $p_3$ with respect to $x$, then it factorizes as 
\[
\res(p_1,p_3)=R_1 R_2,
\]
where
\[
R_1=(4a_2+a_5)^2+(3a_3+a_4+a_6)^2,
\]
\[
R_2=a_3 a_6 l_0^2 + a_2 l_0 l_1 l_2 + a_2^2 (l_1+l_2)(l_1+l_3), 
\]
with $l_0=2a_3+a_4$, $l_1=2 a_2+a_5$, $l_2=a_3 + a_6$ and $l_3=a_3 - a_6$.

\medskip
Let us write
\[
R_{113}=\res(p_1',p_3),\quad R_{133}=\res(p_1,p_3').
\]
If $r_1$ (resp. $r_3$) denotes the remainder of the polynomial
division of $p_1$ by $p_1'$ (resp. $p_3$ by $p_3'$), we shall write
\[
\bar R_{113}=\res(r_1,p_3),\quad \bar R_{133}=\res(p_1,r_3).
\]
Note that $D_1,D_3,D_1',\ldots,\bar R_{113},\bar R_{133}$, are 
defined ``for the generic case'', i.e.,  they are obtained 
as expressions on $a_1,\ldots,a_6$ without imposing any condition. 
Some of the expressions are not included in the paper as they are gruesome. 

\medskip

The first result determines the quadratic systems such that $A(\theta)$ has definite sign. 

\begin{bigtheo}\label{teo:A}
The coefficient $A$ has definite sign (and, in consequence, \eqref{system:quadratic} has at most one limit cycle
surrounding the origin) if and only if one of the following conditions holds:
\begin{enumerate}
 \item $p_1$ or $p_3$ is identically null, or, equivalently, one 
 of the following conditions holds:
\begin{enumerate}
 \item $a_6=a_5=3a_3+a_4=a_2=0$,
 \item $a_1a_6-a_2=a_1a_5-a_3+a_4+a_6=a_1(3 a_3+ a_4)-3a_2-a_5=a_1a_2+a_3 = 0$.
\end{enumerate}

\item $p_1$ has degree one, $p_3$ has degree three (i.e.,  $a_6=3a_2+a_5=0$ and $(3a_3+a_4)a_2\neq 0$), 
$R_2=0$, and $a_2^2\leq 4a_3^2 +4 a_1 a_2 a_3$.

\item $p_3$ has degree one, $p_1$ has degree three (i.e.,  $a_2-a_1a_6=2a_3+a_4+a_1(3a_2+a_5)+a_6=0$
and $a_6 \left(3a_2-a_1(3a_3+a_4)+a_5\right) \neq 0$), and one of the following conditions holds:
\begin{enumerate}
 \item $R_2=0$, $D_1\leq  0$, $R_{113}\neq 0$,
 \item $4a_4-9a_6=4a_3+5a_6=9a_2+a_5=9a_1a_6+a_5=8a_1^2-1=0$.
\end{enumerate}
\item $p_1,p_3$ have degree two (i.e.,  $a_2=a_6=0$ and $a_5(a_3-a_1 a_5)\neq 0$), $3a_3+a_4=0$, and
$4 a_3^2 - 4 a_1 a_3 a_5 \geq a_5^2$.
\item $p_1,p_3$ have degree three (i.e., $a_6(a_2-a_1a_6)\neq 0$), $R_2=0$, and 
one of the following conditions holds:
\begin{enumerate}
\item $D_1<0$, $D_3<0$, $(a_3-a_6)\left(a_2^2+(a_4+2a_3)^2\right)\neq 0$,
\item $D_1=0$, $D_3<0$, $D_1' R_{113}\neq 0$,
\item $D_1=D_1'=0$, $D_3<0$,
\item $D_3=0$, $D_1<0$, $D_3' R_{133}\neq 0$,
\item $D_3=D_3'=0$, $D_1<0$,
\item $D_1=D_3=0$, $D_1' D_3' \bar R_{113}\bar R_{133}\neq 0$,
\item $D_1=D_1'=D_3=0$, $\bar R_{133}\neq 0$,
\item $D_1=D_3=D_3'=0$, $\bar R_{113}\neq 0$.
\end{enumerate}
\end{enumerate}

\end{bigtheo}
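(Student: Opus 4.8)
The plan is to convert the question into one about a product of two real polynomials in one variable and then run a finite case analysis. First I would record the identities $g(\theta)=\cos^{3}\theta\;p_{1}(\tan\theta)$ and $f(\theta)=\cos^{3}\theta\;p_{2}(\tan\theta)$, which follow by factoring $\cos^{3}\theta$ out of each homogeneous cubic; hence $a_{1}g-f=\cos^{3}\theta\;p_{3}(\tan\theta)$ and
\[
A(\theta)=\cos^{6}\theta\;p_{1}(\tan\theta)\,p_{3}(\tan\theta).
\]
Because $\cos^{6}\theta\ge 0$, because $\tan$ maps each of $(-\pi/2,\pi/2)$ and $(\pi/2,3\pi/2)$ bijectively onto $\mathbb{R}$, and because $A$ vanishes at $\theta=\pm\pi/2$, the function $A$ has definite sign on $[0,2\pi]$ if and only if the polynomial $P:=p_{1}p_{3}$ has definite sign on $\mathbb{R}$. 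A real univariate polynomial has definite sign on $\mathbb{R}$ exactly when it never changes sign, i.e. when every real root has even multiplicity; so the theorem amounts to characterising, in terms of $a_{1},\dots,a_{6}$, when every real root of $p_{1}p_{3}$ is of even multiplicity — equivalently, when every real root of $p_{1}$ not shared with $p_{3}$ has even multiplicity, the same for $p_{3}$, and every common real root has multiplicities of equal parity in $p_{1}$ and $p_{3}$.

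The degenerate case $p_{1}\equiv 0$ or $p_{3}\equiv 0$ is case~(1): $P\equiv 0$ then has definite sign, and equating to zero the coefficients of $p_{1}$, respectively those of $p_{3}=a_{1}p_{1}-p_{2}$, yields the two systems, the ``equivalent'' form of (1b) following by eliminating $a_{2},a_{3}$ among the four resulting equations. Henceforth $p_{1},p_{3}\not\equiv 0$. I would then argue that $\deg p_{1}\equiv\deg p_{3}\pmod 2$ (otherwise $\deg P$ is odd and $P$ changes sign at infinity), that neither degree can be $0$ (a direct look at the coefficients shows $\deg p_{1}=0$ forces $\deg p_{3}=3$, and symmetrically), and that $(\deg p_{1},\deg p_{3})=(1,1)$ cannot yield definite sign. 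This leaves the four pairs $(1,3),(3,1),(2,2),(3,3)$, which are exactly cases~(2)--(5); spelling out ``$\deg p_{1}=k$'' and ``$\deg p_{3}=k$'' as explicit (in)equalities on the $a_{i}$ produces the parenthetical genericity conditions there.

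For each of these degree pairs the analysis starts by forcing a common real root, which exists if and only if $\res(p_{1},p_{3})=R_{1}R_{2}=0$; since $R_{1}$ is a sum of two real squares this splits into the sub-locus $R_{1}=0$ (where a short computation gives $p_{1}=(1+x^{2})(a_{2}-a_{6}x)$ and $p_{3}=(1+x^{2})\bigl((a_{1}a_{2}+a_{3})+(a_{2}-a_{1}a_{6})x\bigr)$ — equivalently $f$ and $g$ reduce to first-order trigonometric polynomials — so that $P$ is $(1+x^{2})^{2}$ times a product of two linear factors and the criterion is immediate) and the main sub-locus $R_{2}=0$. Assuming $R_{2}=0$, I would divide out the common root and examine the discriminant of the remaining cofactor: $P$ has definite sign precisely when this cofactor exhibits no sign change, which — depending on the degrees — is recorded by the conditions $D_{1},D_{3}\le 0$, $<0$, or $=0$ together with the auxiliary quantities. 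Here $D_{1}=D_{1}'=0$ detects that $p_{1}$ is a perfect cube; $R_{113}=\res(p_{1}',p_{3})$, and more sharply $\bar R_{113}=\res(r_{1},p_{3})$ (the appropriate object only when $D_{1}=0\ne D_{1}'$, so that the remainder $r_{1}$ is genuinely of degree one and vanishes exactly at the repeated root of $p_{1}$), detect whether a repeated root of $p_{1}$ is also a root of $p_{3}$; and symmetrically for $R_{133},\bar R_{133},D_{3}'$. Translating each such coincidence into an explicit polynomial relation among $a_{1},\dots,a_{6}$ — for instance ``the cofactor's discriminant is $\le 0$'' becomes $a_{2}^{2}\le 4a_{3}^{2}+4a_{1}a_{2}a_{3}$ in case~(2), and the perfect-cube specialisation in case~(3) becomes the five-equation system (3b) — is carried out by resultant/discriminant elimination, computer algebra handling the ``gruesome'' expressions.

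I expect the main obstacle to be the bookkeeping in case~(5): classifying all ways a product of two real cubics that share a real root (once that root is removed) can fail to change sign. One must list the possibilities for simple, double, and triple real roots of $p_{1}$ and of $p_{3}$ and for which repeated roots are common, check that each of the eight sub-cases (a)--(h) matches exactly one such configuration, verify that every detecting resultant or discriminant is used only where it is non-degenerate (e.g. pairing $D_{1}'\ne 0$ with $D_{1}=0$ whenever $\bar R_{113}$ appears), and confirm that the list is both exhaustive and essentially disjoint. Getting every strict versus non-strict inequality correct throughout this enumeration is precisely where the difficulty lies.
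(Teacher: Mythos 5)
Your reduction is the paper's: $A(\theta)=\cos^6\theta\,p_1(\tan\theta)p_3(\tan\theta)$, so definite sign of $A$ is definite sign of $p_1p_3$ on $\mathbb{R}$, followed by a split according to the degrees of $p_1,p_3$ and, in the cubic cases, by $\res(p_1,p_3)=R_1R_2$ and the discriminants. The genuine gap is in your pivot ``a common real root exists if and only if $\res(p_1,p_3)=R_1R_2=0$.'' That biconditional is false: the resultant also vanishes when $p_1$ and $p_3$ share only the pair of complex conjugate roots, and this is not a side issue but the crux of case 5a. When $D_1<0$, $D_3<0$, the condition $R_2=0$ is compatible with $p_1,p_3$ having the \emph{same complex pair but distinct simple real roots}, in which case $p_1p_3=C(x-a)(x-a')\bigl((x-b)^2+d^2\bigr)^2$ changes sign. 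So after ``assuming $R_2=0$, divide out the common root'' you do not know that the common root is the real one, and your sketch has no mechanism to decide this; in particular it cannot produce, or explain, the extra factor $(a_3-a_6)\bigl(a_2^2+(a_4+2a_3)^2\bigr)\neq 0$ in 5a. The paper's proof of exactly this point is nontrivial: it computes the singular locus of $\Var(R_2)$, whose minimal primes are $\langle a_3-a_6,\,2a_2^2+a_2a_5+a_4a_6+2a_6^2\rangle$ and $\langle a_2,\,2a_3+a_4\rangle$, shows the odd-multiplicity roots never coincide there, and then eliminates the unknown roots from $p_1=-a_6(x-a)\bigl((x-b)^2+d^2\bigr)$, $p_3=(a_2-a_1a_6)(x-a')\bigl((x-b)^2+d^2\bigr)$ and takes minimal associated primes to show that ``shared complex pair, distinct real roots'' forces either that singular locus or $R_1=0$ (where one is back to a genuine real common root). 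Without an argument of this kind the sufficiency direction of 5a, and hence of the theorem, is unproven.

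Two further points. First, your enumeration in case 5 must also exclude $p_1=\lambda p_3$: if the cubics were proportional then $p_1p_3=\lambda p_3^{\,2}$ has definite sign automatically, so the absence of any $D_1>0,\ D_3>0$ case in the statement needs the paper's separate computation (equating coefficients and finding $D_1=-4(a_2^2+a_6^2)^2<0$) showing proportionality with three real roots is impossible; your ``bookkeeping'' paragraph does not identify this fact, which is also what makes sub-case 5f disjoint from a common double root. Second, two smaller slips: $A(\pm\pi/2)=-a_6(a_2-a_1a_6)$ need not vanish (harmless, since the equivalence follows by continuity and density of $\tan$), and $\deg p_3=0$ does not force $\deg p_1=3$ — it can force $p_1\equiv0$ or $\deg p_1\le 2$ — so the minimum-degree-zero case needs the short ad hoc argument of the paper (Proposition~\ref{prop:Ag0}), not parity alone. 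Likewise, in case 3 the claims ``$D_1<0\Rightarrow R_{113}\neq0$'' and the derivation of the five-equation locus 3b are themselves primary-decomposition computations; deferring them to computer algebra is legitimate, but they are part of the content you would still have to supply.
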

\begin{rema}
The codimension of the semi-varieties defined by the conditions of Theorem~\ref{teo:A} are
the following (Proposition~\ref{prop:dims}):
\begin{itemize}
 \item $5a)$ has codimension one.
 \item $5b),5d)$ have codimension two.
 \item $2),3a),4)$ have codimension three. 
 \item $1a),1b)$ have codimension four. 
 \item $3b)$ has codimension five.
 \item $5f)$ has codimension two or three.
 \item $5c),5e),5g),5h)$ have codimension of at least two.
\end{itemize}

Note that in case $3b)$ 
the equations already imply $a_2-a_1a_6=2a_3+a_4+a_1(3a_2a_5)+a_6=0$.
\end{rema}

Next, we determine quadratic systems such that $B(t)$ has definite sign.

\begin{bigtheo}\label{teo:B}
The coefficient $B$ has definite sign (indeed, it is identically null) if and only if the parameters $a_1,\ldots,a_6$ belong
to any of the two codimension-four regular varieties defined by the equations
 \begin{equation}\label{p2}
 	a_4+4 a_6=4 a_3+a_4=4 a_2+a_5=a_1=0,
 \end{equation}	
 or
 \begin{equation}\label{p3}
a_6=3a_3+a_4=4 a_2+a_5=3 a_1 a_5+2 a_4=0.
 \end{equation} 
Moreover, \eqref{system:quadratic} has at most one limit cycle surrounding the origin.
\end{bigtheo}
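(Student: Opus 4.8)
The plan is to characterize directly when the $2\pi$-periodic function $B(\theta) = f(\theta) - 2a_1 g(\theta) - g'(\theta)$ has definite sign. The first observation is that $f$ and $g$ are homogeneous trigonometric polynomials of degree three, hence so is $g'$ after expansion; consequently $B$ is a trigonometric polynomial that is a combination of $\cos^3\theta$, $\cos^2\theta\sin\theta$, $\cos\theta\sin^2\theta$, $\sin^3\theta$, and possibly lower-degree terms coming from $g'$. A degree-three homogeneous trigonometric polynomial $c_0\cos^3\theta + c_1\cos^2\theta\sin\theta + c_2\cos\theta\sin^2\theta + c_3\sin^3\theta$, after the substitution $x = \tan\theta$ on each half-period, has the same sign behavior as the cubic $c_3 x^3 + c_2 x^2 + c_1 x + c_0$ times $\cos^3\theta$; since $\cos^3\theta$ changes sign on $[0,2\pi]$, the only way for $B$ to have definite sign on the whole circle is for it to vanish identically. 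This is why the theorem's parenthetical remark ``(indeed, it is identically null)'' holds, and it reduces the whole problem to solving $B \equiv 0$.

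Next I would expand $B(\theta) \equiv 0$ into its Fourier components. Writing everything in terms of $\cos k\theta, \sin k\theta$ for $k=1,3$ (the even/degree-two pieces from $g'$ reduce, since $g$ itself has only odd-degree-three harmonics, so $g'$ contributes $\cos k\theta, \sin k\theta$ terms with $k=1,3$ as well), the identity $B\equiv 0$ becomes a linear system in the four monomial-basis coefficients: four linear equations in the parameters $a_1,\dots,a_6$, but with the twist that the $g'$ term multiplies by $a_1$-independent factors while the $-2a_1 g$ term is bilinear in $a_1$ and $(a_2,\dots,a_6)$. So the system is linear in $(a_2,\dots,a_6)$ for fixed $a_1$, and one can either solve it parametrically in $a_1$ or treat the full set of polynomial equations and decompose the variety with a Gröbner basis / primary decomposition computation (the computational-algebraic-geometry tool the paper advertises). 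I expect this decomposition to split into exactly the two components \eqref{p2} and \eqref{p3}; one should check that each is a smooth (regular) codimension-four variety, e.g.\ by verifying the Jacobian of the four defining equations has full rank along the component, and that the two components are not contained in one another.

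Finally, once $B \equiv 0$ is established on either \eqref{p2} or \eqref{p3}, the constant function $0$ trivially has definite sign, so Theorem~\ref{teo:definite-sign} applies and gives at most one positive limit cycle of the Abel equation \eqref{eq:Abel}; by the Cherkas equivalence this yields at most one limit cycle of \eqref{system:quadratic} surrounding the origin, completing the proof. The main obstacle I anticipate is purely computational bookkeeping: correctly expanding $g'(\theta)$ and collecting the harmonics so that the linear system in $a_1,\dots,a_6$ is set up without error, and then carrying out the ideal decomposition cleanly enough to see that it produces precisely these two components and no spurious extra ones. There is no deep conceptual difficulty here — the conceptual content is entirely the sign-change argument that forces $B\equiv 0$ — but the elimination-theory step must be done carefully, ideally double-checked on a computer algebra system, to be certain that the list \eqref{p2}, \eqref{p3} is complete and that each variety really is regular of the stated codimension.
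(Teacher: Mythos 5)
Your proposal follows essentially the same route as the paper: the anti-symmetry $B(\theta+\pi)=-B(\theta)$ forces $B\equiv 0$, the substitution $x=\tan\theta$ reduces this to the vanishing of the four coefficients of a cubic $q(x)$ in $a_1,\ldots,a_6$, and a primary decomposition (done in SINGULAR in the paper) yields the components \eqref{p2} and \eqref{p3}, after which the Bernoulli/definite-sign criterion gives at most one limit cycle. The only detail to note is that the decomposition actually produces three minimal primes, one of which contains $a_1^2+4$ and hence has no real points, so it is discarded rather than absent.
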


\begin{rema}
The conditions \eqref{p2}, \eqref{p3} in Theorem~\ref{teo:B} imply that $B$ is identically 
null. Therefore, \eqref{eq:Abel} reduces to a Bernoulli equation, and it is possible
to obtain the exact number of limit cycles surrounding the origin (zero or one). 
\end{rema}

The rest of the paper is organized as follows. Section~\ref{sec:2} contains some known results on 
the number of limit cycles of Abel equations. Section~\ref{sec:3} describes the algebraic geometry tools 
that will be required for the proofs of the main results. Section~\ref{sec:main} contains the 
proofs of Theorems~\ref{teo:A} and \ref{teo:B}. Finally, in Appendix~\ref{ap:A} we include
the SINGULAR code for the proofs of Section~\ref{sec:main}.

\section{Abel equations with at most one non-trivial limit cycle}\label{sec:2}
\noindent
In this section we collect known results  about the number of limit cycles of the Abel equation \eqref{eq:Abel} that we will use subsequently.

\begin{prop}[\cite{Pliss, GL}]	
	Assume $A(\theta)$ has definite sign. Then Equation \eqref{eq:Abel} has at most one positive limit cycle.
\end{prop}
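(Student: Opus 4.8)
The plan is to reduce to the case where $A$ has strict definite sign, establish that then \eqref{eq:Abel} has at most two positive $2\pi$-periodic solutions by an integral identity, and finally rule out the coexistence of two isolated ones by a Floquet/Poincar\'e-map analysis.

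\emph{Preliminary reductions.} If $A\equiv 0$, then \eqref{eq:Abel} is a Bernoulli equation and the substitution $\sigma=1/\rho$ turns it into the linear equation $\sigma'=-a_1\sigma-B(\theta)$, whose $2\pi$-periodic solutions form an affine space of dimension $0$ or $1$; hence \eqref{eq:Abel} has at most one positive $2\pi$-periodic solution and there is nothing to prove. So assume $A\not\equiv 0$, and after replacing $\theta$ by $-\theta$ (which replaces $A$ by $-A$) assume $A(\theta)\le 0$ for all $\theta$.

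\emph{At most two positive periodic solutions.} Suppose, for contradiction, that $0<\rho_1<\rho_2<\rho_3$ are three positive $2\pi$-periodic solutions. Dividing \eqref{eq:Abel} by $\rho_i$ and integrating over $[0,2\pi]$, periodicity of $\ln\rho_i$ yields
\[
\int_0^{2\pi}\bigl(A\rho_i^2+B\rho_i+a_1\bigr)\,d\theta=0,\qquad i=1,2,3.
\]
Moreover $w_1=\rho_2-\rho_1$ and $w_2=\rho_3-\rho_2$ are positive and $2\pi$-periodic, and each satisfies a linear equation $w_j'=h_j\,w_j$ with $h_j=A(\rho_j^2+\rho_j\rho_{j+1}+\rho_{j+1}^2)+B(\rho_j+\rho_{j+1})+a_1$, so periodicity forces $\int_0^{2\pi}h_j\,d\theta=0$ for $j=1,2$. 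Subtracting these two relations and using the three integral conditions above to cancel the contributions of $B$ and $a_1$, a short computation gives
\[
\int_0^{2\pi}A(\theta)\,\rho_2(\theta)\,\bigl(\rho_3(\theta)-\rho_1(\theta)\bigr)\,d\theta=0,
\]
which is impossible, since the integrand has constant, nonzero sign ($\rho_2>0$, $\rho_3-\rho_1>0$ and $A\le 0$, $A\not\equiv0$).

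\emph{At most one positive limit cycle.} Suppose $0<\rho_1<\rho_2$ are two positive periodic solutions. Applying the same bookkeeping to $w_1=\rho_2-\rho_1$ together with the trivial solution $\rho\equiv0$ gives the identity $\int_0^{2\pi}A\rho_1\rho_2\,d\theta=2\pi a_1$, and hence the Floquet exponents of $\rho_1,\rho_2$, which equal $\mu_i=\int_0^{2\pi}(3A\rho_i^2+2B\rho_i+a_1)\,d\theta$, simplify (using $\int_0^{2\pi}(A\rho_i^2+B\rho_i+a_1)\,d\theta=0$ and the above identity) to
\[
\mu_1=-\int_0^{2\pi}A\rho_1(\rho_2-\rho_1)\,d\theta\ge 0,\qquad
\mu_2=\int_0^{2\pi}A\rho_2(\rho_2-\rho_1)\,d\theta\le 0 .
\]
Together with the observation that $A\le 0$ forces the displacement function $\Phi(\rho_0)=\rho(2\pi;\rho_0)-\rho_0$ to be negative for large $\rho_0$, and with the non-existence of a third positive periodic solution proved above, a case analysis of the sign pattern of $\Phi$ on the intervals $(0,\rho_1(0))$, $(\rho_1(0),\rho_2(0))$ and $(\rho_2(0),\infty)$ — exactly as in \cite{GL,Pliss} — shows that $\rho_1$ and $\rho_2$ cannot both be isolated. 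Therefore \eqref{eq:Abel} has at most one positive limit cycle.

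I expect the last step to be the main obstacle: the two preceding steps are short and self-contained, but upgrading ``finitely many positive periodic solutions'' to ``at most one limit cycle'' requires the delicate monotonicity/stability analysis of the Poincar\'e map carried out in \cite{Pliss,GL}, and it is there that the hypothesis on the sign of $A$ (together with $C\equiv a_1$ being constant) is exploited in full.
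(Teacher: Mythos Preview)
Your steps 1 and 2 are correct; the gap is in step 3, and it is not merely a missing detail---what you are trying to prove there is false for general Abel equations of the form $\rho'=A\rho^3+B\rho^2+a_1\rho$ with $A$ of definite sign. Take $A\equiv-1$, $B\equiv b$ with $b>0$ constant, and $a_1<0$ with $b^2+4a_1>0$. The equation is autonomous; the two positive roots $\rho_1<\rho_2$ of $-\rho^2+b\rho+a_1=0$ are hyperbolic equilibria (hence hyperbolic $2\pi$-periodic solutions), $\rho_1$ unstable and $\rho_2$ stable. Your inequalities $\mu_1\ge0$, $\mu_2\le0$ hold strictly, the displacement $\Phi$ has sign pattern $(-,+,-)$ on $(0,\infty)$, and nothing in \cite{GL,Pliss} excludes this configuration: those references only bound the \emph{total} number of limit cycles by three, and here the three are $\rho=0,\ \rho_1,\ \rho_2$. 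So the ``case analysis \ldots\ exactly as in \cite{GL,Pliss}'' you invoke does not exist.

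The paper's proof is entirely different and exploits a feature of \emph{this particular} Abel equation that you never use. Because $A$ and $B$ come from homogeneous trigonometric polynomials of degrees $6$ and $3$, one has $A(\theta+\pi)=A(\theta)$ and $B(\theta+\pi)=-B(\theta)$; hence $\rho(\theta)\mapsto-\rho(\theta+\pi)$ maps solutions to solutions and gives a bijection between positive and negative limit cycles. Combining this with the Pliss/Gasull--Llibre bound of three limit cycles (one of which is $\rho\equiv0$) immediately yields at most one positive limit cycle. Without this symmetry your route cannot be completed.
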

\begin{proof}
	From \cite{Pliss}, we have that \eqref{eq:Abel} has at most three limit cycles. Moreover,  notice that $\rho=0$ is always a periodic solution of \eqref{eq:Abel}. 
Since $A(\theta+\pi)=A(\theta)$ and $B(\theta + \pi)=-B(\theta)$, we have that $\rho(\theta)$ is a solution of \eqref{eq:Abel} if and only if $-\rho(\theta +\pi)$ also is. Thus the number of limit cycles is the same in regions $\rho>0$ and $\rho<0$, and consequently Equation \eqref{eq:Abel} has at most one positive limit cycle.
\end{proof}
\begin{prop}
Assume $A(\theta)$ to be identically null. Then Equation \eqref{eq:Abel} has no limit cycle.	
\end{prop}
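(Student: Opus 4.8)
The plan is to use that the hypothesis $A\equiv 0$ turns \eqref{eq:Abel} into the Bernoulli equation $\rho'=B(\theta)\rho^2+a_1\rho$, and to analyse this equation via the classical linearising substitution. First I would note that $\rho\equiv 0$ is a solution and that, the right-hand side being polynomial (hence locally Lipschitz) in $\rho$, uniqueness forces any solution vanishing at a single point to vanish identically. Consequently, any $2\pi$-periodic solution $\rho\not\equiv 0$ of \eqref{eq:Abel} is nowhere zero, so $u:=1/\rho$ is a well-defined $2\pi$-periodic solution of the linear equation $u'=-a_1u-B(\theta)$; conversely, any nowhere-vanishing $2\pi$-periodic solution $u$ of the latter yields the $2\pi$-periodic solution $\rho=1/u$ of \eqref{eq:Abel}. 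Equivalently, the Poincar\'e map of the Bernoulli equation is a M\"obius transformation fixing $0$, so it is either the identity or has at most one further fixed point. Thus it suffices to control the nowhere-vanishing $2\pi$-periodic solutions of $u'=-a_1u-B(\theta)$.

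The key point is the symmetry $B(\theta+\pi)=-B(\theta)$ already recorded in Section~\ref{sec:2}. If $a_1\neq 0$, the monodromy of $u'=-a_1u-B(\theta)$ is multiplication by $e^{-2\pi a_1}\neq 1$, so this equation has a unique $2\pi$-periodic solution $u_0$. Applying the symmetry, $-u_0(\theta+\pi)$ solves the same linear equation and is $2\pi$-periodic, hence equals $u_0$; therefore $u_0(\theta+\pi)=-u_0(\theta)$, which forces $u_0$ to vanish somewhere in $[0,\pi]$. So $1/u_0$ is not globally defined, and \eqref{eq:Abel} has no $2\pi$-periodic solution besides $\rho\equiv 0$. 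If $a_1=0$, then $\int_0^{2\pi}B(\theta)\,d\theta=0$ by the same symmetry, so every solution $u(\theta)=u(0)-\int_0^{\theta}B$ of $u'=-B(\theta)$ is $2\pi$-periodic; those with $|u(0)|$ large are nowhere zero and give a continuum of $2\pi$-periodic solutions $\rho=1/u$ of \eqref{eq:Abel}, which moreover converge uniformly to $\rho\equiv 0$ as $|u(0)|\to\infty$. In neither case does \eqref{eq:Abel} possess an isolated periodic solution other than the trivial one $\rho\equiv 0$ (which corresponds to the critical point, not to a limit cycle), so \eqref{eq:Abel} has no limit cycle.

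I do not expect a genuine obstacle here; the one place to be careful is the bookkeeping around the trivial solution $\rho\equiv 0$ — checking that for $a_1\neq 0$ it is the only periodic solution, and that for $a_1=0$ it is itself accumulated by periodic solutions and hence not isolated. The structural input that makes both cases work is precisely the antiperiodicity $B(\theta+\pi)=-B(\theta)$, which is why the argument genuinely uses the origin of $B$ and not merely the Bernoulli form (an arbitrary Bernoulli equation with constant linear term, e.g. $\rho'=-\rho^2+\rho$, does have an isolated periodic solution).
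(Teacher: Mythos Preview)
Your proof is correct and follows essentially the same route as the paper: reduce to the Riccati/Bernoulli equation $\rho'=B(\theta)\rho^2+a_1\rho$, split on whether $a_1=0$, and exploit the special structure of $B$. The paper's two-sentence proof merely cites $\int_0^{2\pi}B=0$ and asserts the conclusions (centre when $a_1=0$, no limit cycle when $a_1\neq 0$), whereas you supply the details via the linearising substitution $u=1/\rho$ and the antiperiodicity argument forcing the unique periodic $u_0$ to vanish when $a_1\neq 0$; note, incidentally, that the weaker fact $\int_0^{2\pi}B=0$ the paper cites already suffices for this step, since integrating $u_0'=-a_1u_0-B$ over a period gives $\int_0^{2\pi}u_0=0$, hence $u_0$ changes sign.
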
	
\begin{proof}		
	When $A(\theta) \equiv 0$, Equation \eqref{eq:Abel} is the Ricatti equation
	$\rho'= B(\theta)\rho^2+a_1\rho$.  Since $\int_0^{2\pi}B(t)\,dt=0$, when $a_1=0$ it is a centre and if $a_1\neq0$ it  has no limit cycle.
\end{proof}
\begin{prop}
If $B(\theta)$ has definite sign, it is identically null. Moreover, equation\eqref{eq:Abel} has at most one positive limit cycle.
\end{prop}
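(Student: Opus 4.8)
The engine of the argument is the antiperiodicity of $B$. Since $f$ and $g$ are homogeneous trigonometric polynomials of the odd degree $3$ in $\cos\theta,\sin\theta$, the substitution $\theta\mapsto\theta+\pi$ (which sends $\cos\theta\mapsto-\cos\theta$, $\sin\theta\mapsto-\sin\theta$) multiplies each by $(-1)^3=-1$, so $f(\theta+\pi)=-f(\theta)$ and $g(\theta+\pi)=-g(\theta)$; differentiating the latter gives $g'(\theta+\pi)=-g'(\theta)$. Hence
\[
B(\theta+\pi)=f(\theta+\pi)-2a_1 g(\theta+\pi)-g'(\theta+\pi)=-\bigl(f(\theta)-2a_1 g(\theta)-g'(\theta)\bigr)=-B(\theta),
\]
which is exactly the relation already used in Section~\ref{sec:2}. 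First I would record this identity.

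Next I would deduce that a definite sign forces $B\equiv 0$. Assume without loss of generality $B(\theta)\geq 0$ for every $\theta$ (the case $B\leq 0$ is symmetric). Applying the hypothesis at the point $\theta+\pi$ gives $B(\theta+\pi)\geq 0$, while the antiperiodicity gives $B(\theta+\pi)=-B(\theta)\leq 0$. Therefore $B(\theta+\pi)=0$ for all $\theta$, i.e.\ $B$ is identically null.

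Finally, with $B\equiv 0$, Equation~\eqref{eq:Abel} becomes the Bernoulli equation $\rho'=A(\theta)\rho^3+a_1\rho$. On the region $\rho>0$ the map $u=\rho^{-2}$ is a diffeomorphism onto $u>0$ that sends positive periodic solutions to positive periodic solutions, and a direct computation turns the equation into the first-order linear $2\pi$-periodic equation $u'=-2a_1 u-2A(\theta)$. Such a linear equation has at most one isolated periodic solution: if $a_1\neq 0$ it has exactly one periodic solution, which is hyperbolic and hence a limit cycle; if $a_1=0$ it either has no periodic solution or a full continuum of them (determined by $\int_0^{2\pi}A$), none of which is isolated. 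Pulling back through $u=\rho^{-2}$, the $\rho$-equation has at most one isolated positive periodic solution, i.e.\ at most one positive limit cycle; combined with the conclusion of the previous paragraph this proves the proposition. (In fact this gives the sharper count $0$ or $1$ recorded in the remark after Theorem~\ref{teo:B}.)

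The proof is essentially routine; the only points needing care are (i) verifying the antiperiodicity for $B$ itself — it is the $g'$ term, not present in $A$, that must be checked — and (ii) in the degenerate subcase $a_1=0$, observing that the continuum of periodic solutions of the linearised equation contributes \emph{no} limit cycles (because none is isolated), so that the bound ``at most one'' still holds. There is no substantial obstacle beyond this bookkeeping.
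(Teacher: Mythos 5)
Your proof is correct and follows essentially the same route as the paper: the antiperiodicity $B(\theta+\pi)=-B(\theta)$ forces $B\equiv 0$, and then \eqref{eq:Abel} reduces to the Bernoulli equation, which admits at most one positive limit cycle. You merely spell out (via the standard substitution $u=\rho^{-2}$) the Bernoulli step that the paper states without detail, so there is nothing to object to.
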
	
\begin{proof}
	Since $B(\theta+\pi)=-B(\theta)$, if $B(\theta)$ has definite sign,  it is necessarily identically null. Then  \eqref{eq:Abel} is the Bernoulli equation $\rho'=A(t)\rho^3+a_1\rho$ which has at most one positive limit cycle.
\end{proof}
\begin{rema}
The criterion $\alpha A+\beta B$ has definite sign for some $\alpha,\beta\in\mathbb{R}$, $\alpha^2+\beta^2\neq 0$, used in \cite{AGG,HZ} to obtain upper bounds for the number of limit cycles in Abel equations is not relevant in this context since if $\alpha A+\beta B$ has definite sign then, by the change of variables $t\to \pi+t$, $\alpha A-\beta B$
has the same definite sign. Therefore $2 \alpha A=(\alpha A+\beta B)+(\alpha A-\beta B)$ has definite sign,
and consequently $A$ has definite sign if $\alpha\neq 0$ and $B(t)\equiv 0$ otherwise. 
\end{rema}  


\section{Algebraic geometry tools}\label{sec:3}

\noindent
In this section, we summarize the computational algebraic geometry results to be used subsequently. In all cases, we will include references to the SINGULAR (\cite{DGPS}) commands necessary to perform the corresponding computation. Those readers interested in considering computational algebraic geometry techniques in more depth are encouraged to consult \cite{CLO} for an introduction, or \cite{Singular-book} for a fuller  development. Furthermore, readers familiar with differential equations will enjoy \cite{Romanovski-Shafer} which includes a comprehensive introduction to the basic generalities of computational algebraic geometry in its first chapter.

Let us consider a system of polynomial equations in $n$ variables $x_1, \ldots, x_n$ with coefficients in a field $\Bbbk$,
\begin{equation}\label{A_ecu1}
\begin{array}{rcl}
f_1(x_1, \ldots, x_n) & = &  0,\\
& \vdots & \\
f_s(x_1, \ldots, x_n) & = & 0.
\end{array}
\end{equation}
Clearly, $(a_1, \ldots, a_n) \in \Bbbk^n$ is a solution of \eqref{A_ecu1} if and only if $$\sum_{i=1}^s g_i(a_1, \ldots, a_n) f_i(a_1, \ldots, a_n) = 0$$ for every $g_i$ in the ring $\Bbbk[x_1, \ldots, x_n]$ of polynomials in $n$ variables  with coefficients in $\Bbbk.$ Thus, the set of solutions of \eqref{A_ecu1} in $\Bbbk^n$ matches the set of zeros in $\Bbbk^n$ of the ideal $\langle f_1, \ldots, f_s \rangle$  of $\Bbbk[x_1, \ldots, x_n]$ generated by $f_1, \ldots, f_s$. The set of zeros of $I  = \langle f_1, \ldots, f_s \rangle$ in $\Bbbk^n$ is called the (affine) variety of $I$ in $\Bbbk^n$.  It is denoted $\mathcal{V}_\Bbbk(I)$, or simply $\mathcal{V}(I)$ when no confusion is possible.

Here, it is convenient to recall that all the ideals of $\Bbbk[x_1, \ldots, x_n]$ are finitely generated by the Hilbert Basis Theorem (see \cite[Theorem 1.3.5]{Singular-book}). Therefore, to study a system of polynomial equations is the same as to study the ideal generated by the polynomials of the system, and vice versa.

Furthermore, since $f(a_1, \ldots, a_n) = 0$ if and only $f^r(a_1, \ldots, a_n) = 0$ for every positive integer $r$, one has that $\mathcal{V}(I) = \mathcal{V}(\sqrt{I})$, where $$\sqrt{I} = \{ f \in \Bbbk[x_1, \ldots, x_n]\ \mid f^r \in I,\ \text{for some}\ r \in \mathbb{Z}_+ \}$$ is the radical of $I$. 

This ideal-variety approach has two immediate advantages. On the one hand, the varieties in $\Bbbk^n$ of the ideals of $\Bbbk[x_1, \ldots, x_n]$ form the closed sets of a topology on $\Bbbk^n$ called the Zariski topology of $\Bbbk^n$ (see \cite[Lemma A.2.4]{Singular-book}).  And on the other, there exists of a kind of factorization theory for ideals of $\Bbbk[x_1, \ldots, x_n]$ in which the intersection of ideals plays the role of the product: 
the so-called primary decomposition theory that we shall outline in the following.

Observe that because of the well-known property $$\mathcal{V}(J_1 \cap J_2) = \mathcal{V}(J_1) \cup \mathcal{V}(J_2),$$ for $J_i,\ i = 1,2,$ ideals of $\Bbbk[x_1, \ldots, x_n]$ (see \cite[Lemma A.2.3, part (2)]{Singular-book}), a decomposition of the ideal defined by the polynomials in \eqref{A_ecu1} as an intersection of ``simpler ideals'' will mean splitting the system \ref{A_ecu1} into several easier-to-solve systems, hopefully!

Depending on the purpose, some systems of generators of a polynomial ideal are better than others. For example, minimal systems of generators (i.e., systems of generators such that no generator is an algebraic combination of the others) are preferred for a concise description of the variety.  But Gr\"obner bases, which are far from being minimal in the above sense, are special systems of generators with good computational properties.  Given a system of generators of an ideal $I$ of $\Bbbk[x_1, \ldots, x_n]$, one can compute a minimal system of generators or a Gr\"obner basis of $I$ by using the SINGULAR commands \texttt{mres(I,1)[1]} or \texttt{std(I)}, respectively.

The original aim of the Gr\"obner bases methods was to compute the remainder of a polynomial under division by a polynomial ideal, something that can be done with the command \texttt{reduce} in SINGULAR. Nowadays, Gr\"obner bases are used for more sophisticated tasks.  Computing the dimension of a variety or eliminating variables are just two classic examples.

Given a system of generators of an ideal $I$ of $\Bbbk[x_1, \ldots, x_n]$, the problem of the computation of the dimension of $\mathcal{V}(I)$ (equivalently, the Krull dimension of $\Bbbk[x_1, \ldots, x_n]/I$) may be reduced to a pure combinatorial problem after the computation of one (any) Gr\"obner basis of $I$ (see \cite[Chapter 9]{CLO}). The SINGULAR command \texttt{dim(std(I))} will compute the dimension of $\mathcal{V}_\mathbb{C}(I)$ for us. The precise notion of dimension will be defined at the end of this section. On other hand, the problem of the elimination of a variable, say $x_n$, from the ideal $I$, consists of determining a system of generators of $I \cap \Bbbk[x_1, \ldots, x_{n-1}]$. This can be easily computed from a Gr\"obner basis of $I$ with respect to a suitable well-ordering of the monomials in $\Bbbk[x_1, \ldots, x_n]$. Geometrically, the elimination of variables has the following meaning:

\begin{prop} 
Let $\Bbbk$ be algebraically closed, and let $I$ be an ideal of $\Bbbk[x_1, \ldots, x_n]$. If $\pi : \Bbbk^n \to \Bbbk^{n-1}$ is the projection map that sends $(a_1, \ldots, a_n)$ to $(a_1, \ldots, a_{n-1})$ then the Zariski closure of $\pi(\mathcal{V}(I))$ in $ \Bbbk^{n-1}$ is equal to $\mathcal{V}(I \cap \Bbbk[x_1, \ldots, x_{n-1}])$.
\end{prop}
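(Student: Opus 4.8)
The plan is to prove the two inclusions of closed subsets of $\Bbbk^{n-1}$ separately, writing $I_1 := I \cap \Bbbk[x_1, \ldots, x_{n-1}]$ for the elimination ideal. The easy inclusion $\overline{\pi(\mathcal{V}(I))} \subseteq \mathcal{V}(I_1)$ does not use that $\Bbbk$ is algebraically closed: if $(a_1, \ldots, a_n) \in \mathcal{V}(I)$ and $f \in I_1$, then $f$ does not involve $x_n$, so $f(a_1, \ldots, a_{n-1}) = f(a_1, \ldots, a_n) = 0$ because $f \in I$; hence $\pi(\mathcal{V}(I)) \subseteq \mathcal{V}(I_1)$, and since $\mathcal{V}(I_1)$ is Zariski closed it contains the closure of $\pi(\mathcal{V}(I))$.

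For the reverse inclusion I would pass to the operator $\mathbf{I}(\cdot)$ assigning to a subset of $\Bbbk^{n-1}$ the ideal of $\Bbbk[x_1,\ldots,x_{n-1}]$ of polynomials vanishing on it, and use the standard identity $\overline{S} = \mathcal{V}(\mathbf{I}(S))$ together with $\mathcal{V}\bigl(\mathbf{I}(\mathcal{V}(J))\bigr) = \mathcal{V}(J)$. It suffices to show $\mathbf{I}(\pi(\mathcal{V}(I))) \subseteq \mathbf{I}(\mathcal{V}(I_1))$ (the opposite inclusion is immediate from the first paragraph), since applying $\mathcal{V}$ then yields $\mathcal{V}(I_1) = \mathcal{V}(\mathbf{I}(\mathcal{V}(I_1))) \subseteq \mathcal{V}(\mathbf{I}(\pi(\mathcal{V}(I)))) = \overline{\pi(\mathcal{V}(I))}$. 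So let $g \in \Bbbk[x_1,\ldots,x_{n-1}]$ vanish on $\pi(\mathcal{V}(I))$. Viewed in $\Bbbk[x_1,\ldots,x_n]$, $g$ then vanishes on all of $\mathcal{V}(I)$, so Hilbert's Nullstellensatz (here is where $\Bbbk$ algebraically closed enters) gives $g^r \in I$ for some $r \geq 1$; as $g^r \in \Bbbk[x_1,\ldots,x_{n-1}]$, we get $g^r \in I_1$, hence $g \in \sqrt{I_1} = \mathbf{I}(\mathcal{V}(I_1))$, i.e.\ $g$ vanishes on $\mathcal{V}(I_1)$, as required.

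The only non-formal ingredient is Hilbert's Nullstellensatz, invoked twice; everything else is elementary manipulation of the Galois correspondence between ideals and varieties. I expect the main point requiring care to be purely bookkeeping: keeping straight in which polynomial ring each $\mathcal{V}$ and $\mathbf{I}$ is taken, and checking that for polynomials free of $x_n$ the two notions are compatible with the inclusion $\Bbbk[x_1,\ldots,x_{n-1}] \hookrightarrow \Bbbk[x_1,\ldots,x_n]$ and with $\pi$ — which is exactly what makes the argument close. It is also worth noting that the algebraic-closure hypothesis is genuinely needed: over $\mathbb{R}$ the projection of a variety need not be Zariski dense in the variety of its elimination ideal.
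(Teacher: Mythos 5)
Your proof is correct; note that the paper does not prove this proposition itself but simply cites the Closure Theorem (\cite[Theorem 3, Section 3.2]{CLO}), and your argument is exactly the standard Nullstellensatz proof of that statement, so there is no real divergence of approach. One tiny simplification: the second appeal to the Nullstellensatz is unnecessary, since from $g^r \in I_1$ you only need the trivial inclusion $\sqrt{I_1} \subseteq \mathbf{I}(\mathcal{V}(I_1))$ (if $g^r$ vanishes on $\mathcal{V}(I_1)$, so does $g$), while the algebraic closure of $\Bbbk$ is genuinely used only once, to get $g \in \sqrt{I}$ from the vanishing of $g$ on $\mathcal{V}(I)$.
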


\begin{proof}
See \cite[Theorem 3, Section 3.2]{CLO}.
\end{proof}

The elimination of variables is computed in SINGULAR with the command \texttt{eliminate}.

Let us now briefly summarize  the primary decomposition process for ideals of $\Bbbk[x_1, \ldots, x_n]$. To do so, we shall first introduce the quotient operation and its most elementary properties.

\begin{defi}
Let $I$ and $J$ be ideals of $\Bbbk[x_1, \ldots, x_n]$. The quotient of $I$ and $J$ is the ideal $(I:J)$  of $\Bbbk[x_1, \ldots, x_n]$ defined as follows: $$(I:J) = \{ g \in \Bbbk[x_1, \ldots, x_n]\ \mid\ g f \subseteq I,\ \text{for every}\ f \in J \}.$$
\end{defi}

It is not difficult to see that $I \subseteq (I:J)$ and $\big((I:J) : J\big) = (I:J^2)$. Then, we have a chain of ideals $I \subseteq (I:J) \subseteq \ldots \subseteq (I:J^r) \subseteq \ldots$ that necessarily stabilizes by the Noetherian property of $\Bbbk[x_1, \ldots, x_n]$. If $N$ is the smallest integer for which the above chain stabilizes, then the ideal $(I:J^N)$ is called the saturation of $I$ by $J$ and is usually denoted $(I:J^\infty)$.

Both quotient and saturation can be computed using the SINGULAR commands \texttt{quotient} and \texttt{sat}, respectively (the latter from the \texttt{elim} library).

\begin{rema}\label{rema:membership}
Observe that an elementary necessary and sufficient condition for $J \subseteq I$ is $(I:J) = \langle 1 \rangle$.
Moreover, one has that $f \in \sqrt{I}$ if and only if $(I:\langle f \rangle^\infty) = \langle 1 \rangle$. So, the radical membership problem can be computationally solved by computing the saturation of $I$ by $\langle f \rangle$.
\end{rema}

Geometrically, when $\Bbbk$ is algebraically closed, the quotient and the saturation of $I$ by $J$ have the same behaviour which is nothing but the Zariski closure of the difference of varieties.  In particular, the following holds: $$\mathcal{V}(I:J) = \overline{\mathcal{V}(I) \setminus \mathcal{V}(J)} = \overline{\mathcal{V}(I) \setminus \mathcal{V}(J^r)} = \mathcal{V}(I:J^r),$$ for every positive integer $r$ (see \cite[Theorem 7, section 4.4]{CLO}).

The next result represents a first step for the decomposition of an ideal of $\Bbbk[x_1, \ldots, x_n]$:

\begin{lema}{\rm \textbf{(Splitting tool).}}\label{LemaPD1}
Let $I$ be an ideal of $\Bbbk[x_1, \ldots, x_n]$, and let $g \in \Bbbk[x_1, \ldots, x_n]$. If $N$ is the smallest integer such that $(I:\langle g \rangle^\infty) = (I:\langle g^N \rangle)$, then $$I = \big(I:\langle g \rangle^\infty\big) \cap \big(I+\langle g^N \rangle \big).$$
\end{lema}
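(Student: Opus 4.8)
The plan is to prove the two inclusions separately, the easy one being immediate and the substantive one relying on multiplying a general element of the right-hand side by a suitable power of $g$ and then invoking the stabilization of the saturation chain.

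First I would observe that $I \subseteq \big(I:\langle g \rangle^\infty\big)$ and $I \subseteq \big(I+\langle g^N \rangle\big)$ hold trivially (the first because $f\cdot h \in I$ whenever $f\in I$, the second by definition of sum of ideals), so $I$ is contained in the intersection and only the reverse inclusion requires work.

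For the reverse inclusion, I would take an arbitrary $h \in \big(I:\langle g \rangle^\infty\big) \cap \big(I+\langle g^N \rangle\big)$ and write $h = a + b\, g^N$ with $a \in I$ and $b \in \Bbbk[x_1,\ldots,x_n]$. By the choice of $N$ we have $\big(I:\langle g \rangle^\infty\big) = \big(I:\langle g^N \rangle\big)$, so $h \in \big(I:\langle g^N \rangle\big)$, i.e. $h\, g^N \in I$. Expanding, $a\, g^N + b\, g^{2N} \in I$; since $a \in I$ forces $a\, g^N \in I$, we conclude $b\, g^{2N} \in I$, that is, $b \in \big(I:\langle g^{2N}\rangle\big)$. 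Now the key point: because $N$ is the smallest index at which the ascending chain $I \subseteq (I:\langle g\rangle)\subseteq (I:\langle g^2\rangle)\subseteq\cdots$ stabilizes, we have $\big(I:\langle g^{2N}\rangle\big) = \big(I:\langle g^{N}\rangle\big)$, hence $b \in \big(I:\langle g^{N}\rangle\big)$ and therefore $b\, g^N \in I$. Consequently $h = a + b\, g^N \in I$, which establishes $\big(I:\langle g \rangle^\infty\big) \cap \big(I+\langle g^N \rangle\big) \subseteq I$ and completes the proof.

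There is essentially no serious obstacle here: the only point that needs care is making sure the exponent bookkeeping is consistent, namely that $2N \geq N$ so that $\big(I:\langle g^{2N}\rangle\big)$ is indeed one of the stabilized terms equal to the saturation; everything else is a direct manipulation of ideal membership.
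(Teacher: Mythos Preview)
Your argument is correct: the easy inclusion is immediate, and for the reverse one your exponent bookkeeping (showing $b\,g^{2N}\in I$ and then using stabilization to conclude $b\,g^{N}\in I$) is exactly the standard manipulation. The paper itself does not give a proof but simply cites \cite[Lemma~3.3.6]{Singular-book}; your proposal reproduces the usual proof found there, so there is nothing further to compare.
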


\begin{proof}
See \cite[Lemma 3.3.6]{Singular-book}.
\end{proof}

When $\Bbbk$ is algebraically closed, an immediate consequence of the splitting tool is the formula $$\mathcal{V}(I) = \mathcal{V}(I:f^\infty) \cup \mathcal{V}(I+\langle f \rangle) = \overline{\mathcal{V}(I) \setminus \mathcal{V}(f)} \cup \big(\mathcal{V}(I) \cap \mathcal{V}(f) \big)$$ where the varieties in the union on the right-hand side can be carefully interpreted as the solutions of the system associated with $I$ by imposing the conditions $f(x_1, \ldots, x_n)$ different from or equal to zero, respectively.

At this point, we are in a position to clarify what ``simpler ideals'' means in the context of primary decomposition theory.

\begin{defi}
An ideal $P$ of $\Bbbk[x_1, \ldots, x_n]$ is said to be prime if $f g \in P$ and $g \not\in P$ implies $f \in P$. An ideal $Q$ of $\Bbbk[x_1, \ldots, x_n]$ is said to be primary if $f g \in Q$ and $g \not\in Q$ implies $f \in \sqrt{Q}$.
\end{defi}

Notice that every \emph{prime ideal $P$ is primary}: indeed, if $P$ is prime, $\sqrt{P} = P$. Moreover, one can easily check that the \emph{radical of a primary ideal is prime}.
Here, it is important to emphasize that, if $\Bbbk$ is algebraically closed, then $P$ is a prime ideal of $\Bbbk[x_1, \ldots, x_n]$ if and only if $\mathcal{V}(P)$ is Zariski irreducible (see \cite[Corollary 4, Section 4.5]{CLO}). So, in this case, the variety of a primary ideal is a Zariski irreducible subset of $\Bbbk^n$.

%

\begin{theo}\label{ThPD1}
Let $I$ be an ideal of $\Bbbk[x_1, \ldots, x_n]$. If $I \neq \langle 1 \rangle,$ there exists a decomposition of $I$ as the intersection of finitely many primary ideals.
\end{theo}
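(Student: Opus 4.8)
The plan is to prove this classical Lasker--Noether statement through the notion of irreducible ideals, using only the Noetherian property of $\Bbbk[x_1,\ldots,x_n]$ (the Hilbert Basis Theorem, already invoked in this section). Call a proper ideal $J$ of $\Bbbk[x_1,\ldots,x_n]$ \emph{irreducible} if whenever $J = J_1 \cap J_2$ for ideals $J_1,J_2$, then $J = J_1$ or $J = J_2$. The argument then splits into two essentially independent steps: (i) every proper ideal is a finite intersection of irreducible ideals; and (ii) every irreducible ideal is primary. Granting both, I would write $I = Q_1 \cap \cdots \cap Q_m$ with each $Q_j$ irreducible by (i), and conclude by (ii) that each $Q_j$ is primary, which is exactly the assertion.

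For step (i) I would use Noetherian induction. Let $\Sigma$ be the set of proper ideals of $\Bbbk[x_1,\ldots,x_n]$ that are \emph{not} finite intersections of irreducible ideals, and suppose $\Sigma \neq \emptyset$. Since $\Bbbk[x_1,\ldots,x_n]$ is Noetherian, $\Sigma$ has a maximal element $I_0$ with respect to inclusion. Then $I_0$ is not irreducible (otherwise it is a one-term intersection of irreducibles), so $I_0 = J_1 \cap J_2$ with $I_0 \subsetneq J_1$ and $I_0 \subsetneq J_2$. By maximality, neither $J_1$ nor $J_2$ belongs to $\Sigma$, so each is a finite intersection of irreducible ideals; hence so is $I_0 = J_1 \cap J_2$, contradicting $I_0 \in \Sigma$. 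Therefore $\Sigma = \emptyset$.

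For step (ii), let $Q$ be irreducible. Under the order-preserving correspondence between ideals of $\Bbbk[x_1,\ldots,x_n]$ containing $Q$ and ideals of $R := \Bbbk[x_1,\ldots,x_n]/Q$, irreducibility of $Q$ becomes irreducibility of the zero ideal of $R$, and primariness of $Q$ becomes the statement that every zero-divisor of $R$ is nilpotent; so it suffices to prove the latter. Suppose $xy = 0$ in $R$ with $y \neq 0$. The ascending chain $\mathrm{Ann}(x) \subseteq \mathrm{Ann}(x^2) \subseteq \cdots$ stabilizes, say $\mathrm{Ann}(x^n) = \mathrm{Ann}(x^{n+1})$. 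I claim $\langle x^n \rangle \cap \langle y \rangle = \langle 0 \rangle$: if $a = b x^n = c y$ for some $b,c$, then $ax = cyx = 0$, so $b x^{n+1} = 0$, i.e. $b \in \mathrm{Ann}(x^{n+1}) = \mathrm{Ann}(x^n)$, whence $a = b x^n = 0$. Since $\langle y \rangle \neq \langle 0 \rangle$ and $\langle 0 \rangle$ is irreducible in $R$, we must have $\langle x^n \rangle = \langle 0 \rangle$, that is $x^n = 0$; thus $x$ is nilpotent, $\langle 0 \rangle$ is primary in $R$, and $Q$ is primary.

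The routine part is step (i), a standard Noetherian induction; the step carrying the actual content is (ii), and the single point that must be gotten right there is the stabilization-of-annihilators trick that forces $\langle x^n\rangle \cap \langle y\rangle = \langle 0\rangle$ — this is precisely where the topological hypothesis of irreducibility is converted into the algebraic primary condition. (Alternatively, one could simply cite the Lasker--Noether theorem from \cite{Singular-book} or \cite{CLO}, but the argument above is short and self-contained.)
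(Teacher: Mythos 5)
Your proof is correct, but it follows a different route from the paper. You give the classical Lasker--Noether argument: first, by Noetherian induction on a maximal counterexample, every proper ideal is a finite intersection of \emph{irreducible} ideals; second, every irreducible ideal is primary, proved in the quotient ring via the stabilization of the chain $\mathrm{Ann}(x)\subseteq\mathrm{Ann}(x^2)\subseteq\cdots$, which forces $\langle x^n\rangle\cap\langle y\rangle=\langle 0\rangle$ and hence $x^n=0$. (A small point you leave implicit: in the induction step the ideals $J_1,J_2$ are automatically proper, since $J_i=\langle 1\rangle$ would force $I_0=J_{3-i}$, contradicting strictness.) The paper instead bypasses irreducible ideals entirely and uses its Splitting tool (Lemma~\ref{LemaPD1}): if $I$ is not primary, one picks $g\notin\sqrt{I}$ with $(I:\langle g\rangle^\infty)\supsetneq I$, writes $I=\big(I:\langle g\rangle^\infty\big)\cap\big(I+\langle g^N\rangle\big)$ with both factors strictly containing $I$ (the second because $g^N\notin I$), and iterates, termination again coming from the Noetherian property. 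What each approach buys: yours is the standard self-contained textbook proof, needing nothing beyond the ascending chain condition, and it isolates the irreducible-implies-primary lemma as the real content; the paper's is shorter given that the saturation/quotient machinery has already been set up, and it fits the computational theme of the section, since the same splitting by $(I:g^\infty)$ and $I+\langle g^N\rangle$ is exactly the operation performed in the SINGULAR computations (\texttt{sat}, \texttt{quotient}, \texttt{minAssGTZ}) used later in the paper.
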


\begin{proof}
If $I$ is primary, there is nothing to prove. Otherwise, there exists $g \not\in \sqrt{I}$ such that $(I:g^\infty) \supsetneq I$. Thus, by Lemma \ref{LemaPD1}, $I$ decomposes as $(I:g^\infty) \cap (I + \langle g^N \rangle)$. Both ideals strictly contain $I$. If they are primary, we are done. Otherwise, we can repeat the same argument with $(I:g^\infty)$ and $(I + \langle g^N \rangle)$, and so on and so forth. In so far as this process cannot continue indefinitely because of the Noetherian property of $\Bbbk[x_1, \ldots, x_n]$, our claim follows.
\end{proof}

A decomposition of $I$ into primary ideals, $I = Q_1 \cap \ldots \cap Q_r$, is called a primary decomposition of $I$. Since $\sqrt{I} = \sqrt{Q_1} \cap \ldots \cap \sqrt{Q_s}$, by removing redundancies if necessary, we obtain finitely many prime ideals, $P_1, \ldots, P_t$, not contained one in another, such that $$\mathcal{V}(I) = \mathcal{V}(P_1) \cup \ldots \cup \mathcal{V}(P_t) .$$ Therefore, when $\Bbbk$ is algebraically closed, a primary decomposition of an ideal $I$ yields a decomposition of $\mathcal{V}(I)$ into Zariski irreducible varieties. In general, the prime ideals defining these varieties do not depend on the decomposition, and are called minimal associated primes of $I$ (\cite[Theorem 4.1.5]{Singular-book}).

\begin{rema}\label{rem:MinAss}
Let $\{P_1, \ldots, P_t\}$ be the set of minimal associated primes of an ideal $I$ of $\Bbbk[x_1, \ldots, x_n]$. If $P'$ is a prime ideal such that $I \subseteq P' \subseteq P_j$ for some $j$, then $P' = P_j$. Indeed, it suffices to note that $\sqrt{I} = P_1 \cap \ldots \cap P_t \subseteq P' \subseteq P_j$ implies $P_i \subseteq P' \subseteq P_j$ for some $i$, and that necessarily $i = j$. Therefore, the minimal associated primes of $I$ are the ``smallest'' prime ideals containing $I$.
\end{rema}

In conclusion, there exists a computational method to write the set of solutions of a system of polynomial equations in several variables as the union of the solution of finitely many systems. Moreover, if $\Bbbk$ is algebraically closed, the varieties associated with those systems are Zariski irreducible.

The minimal associated primes of an ideal of $\Bbbk[x_1, \ldots, x_n]$ can be computed by using the SINGULAR command \texttt{minAssGTZ} (library \texttt{primary}).

We end this section by defining the notion of dimension of an algebraic variety.

\begin{defi}
Let $I$ be an ideal of $\Bbbk[x_1, \ldots, x_n]$. The dimension of $I,\ \dim(I)$, is the supremum of the lengths of all chains of prime ideals in $\Bbbk[x_1, \ldots, x_n]/I$.
\end{defi}

Equivalently, $\dim(I)$ is supremum of the lengths of all chains of prime ideals in $\Bbbk[x_1, \ldots, x_n]$ containing $I$ (because of the well-known correspondence between ideals of the quotient $A/I$ and ideals of $A$ containing $I$). Observe that 
$$\dim(I) = \max \big\{\dim(P) \mid P\ \text{is a minimal associated prime of}\ I\big\}$$ by Remark \ref{rem:MinAss}. 

This notion of dimension does not depend on the base field $\Bbbk$ in the sense that if $\Bbbk \hookrightarrow \mathbb{K}$ is an extension of $\Bbbk$, then the dimension of $I$ is the same regardless of whether $I$ is an ideal of $\Bbbk[x_1, \ldots, x_n]$ or an ideal of $\mathbb{K}[x_1, \ldots, x_n]$ (see \cite[Theorem 3.5.1]{Singular-book}).

Since the dimension of $\mathcal{V}(I)$ is the supremum of the lengths of the chains of its closed irreducible sets, when $\Bbbk$ is algebraically closed, the dimension of $\mathcal{V}(I)$ is the maximum of $\dim(P)$ where $P$ is any minimal associated prime of $I$. 

The next result is a particular version of the General Jacobian criterion (see \cite[Theorem 5.7.1]{Singular-book}). 

\begin{theo}\label{theo:dimprimes}
Let $I = \langle f_1, \ldots, f_m \rangle \subset \Bbbk[x_1, \ldots, x_n]$ be an ideal and $P$ a minimal associated prime of $I$. If $\mathbf{a} = (a_1, \ldots, a_n) \in \mathcal{V}(P) \subseteq \Bbbk^n$, then \begin{equation}\label{ecu:jacob}\mathrm{rank}\left(\frac{\partial f_i}{\partial x_j} (\mathbf{a})\right) \leq n-\dim(P), \end{equation} and $\mathbf{a} = (a_1, \ldots, a_n)$ is a regular point of $\mathcal{V}(I)$ if and only if the equality holds.
\end{theo}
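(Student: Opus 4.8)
The plan is to pass to the local ring of $\mathcal{V}(I)$ at $\mathbf{a}$ and translate both sides of \eqref{ecu:jacob} into invariants of that ring. Write $R = \Bbbk[x_1,\ldots,x_n]$, let $\mathfrak{m}_{\mathbf{a}} = \langle x_1-a_1,\ldots,x_n-a_n\rangle$ be the maximal ideal of $R$ at $\mathbf{a}$, and set $A = (R/I)_{\mathfrak{m}_{\mathbf{a}}}$, a Noetherian local ring with maximal ideal $\mathfrak{m}$. First I would compute the Zariski cotangent space $\mathfrak{m}/\mathfrak{m}^2$: expanding each $f_i$ in a Taylor series around $\mathbf{a}$, its linear part is $\sum_j \frac{\partial f_i}{\partial x_j}(\mathbf{a})\,(x_j-a_j)$, so the natural surjection $\mathfrak{m}_{\mathbf{a}}/\mathfrak{m}_{\mathbf{a}}^2 \cong \Bbbk^n \to \mathfrak{m}/\mathfrak{m}^2$ has kernel exactly the row space of the Jacobian matrix $J(\mathbf{a}) = \big(\frac{\partial f_i}{\partial x_j}(\mathbf{a})\big)$. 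Hence $\dim_\Bbbk \mathfrak{m}/\mathfrak{m}^2 = n - \mathrm{rank}\, J(\mathbf{a})$, and \eqref{ecu:jacob} becomes the assertion $\dim_\Bbbk \mathfrak{m}/\mathfrak{m}^2 \geq \dim(P)$.

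Next I would invoke the two standard commutative-algebra facts about $A$: (i) the embedding dimension bounds the Krull dimension from above, $\dim A \leq \dim_\Bbbk \mathfrak{m}/\mathfrak{m}^2$, a consequence of Krull's principal ideal theorem together with Nakayama's lemma, with equality precisely when $A$ is a regular local ring; and (ii) a regular local ring is an integral domain. To finish the inequality I then relate $\dim A$ to $\dim(P)$: by the correspondence between prime ideals of $A$ and prime ideals of $R$ contained in $\mathfrak{m}_{\mathbf{a}}$ and containing $I$, $\dim A$ equals the local dimension of $\mathcal{V}(I)$ at $\mathbf{a}$, i.e.\ the maximum of $\dim(P')$ over the minimal associated primes $P'$ of $I$ with $\mathbf{a} \in \mathcal{V}(P')$ (using the field-independence of $\dim$ recalled above, so that this makes sense even when $\Bbbk$ is not algebraically closed). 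Since $\mathbf{a} \in \mathcal{V}(P)$ this gives $\dim A \geq \dim(P)$, and combining with (i) yields $\dim_\Bbbk \mathfrak{m}/\mathfrak{m}^2 \geq \dim A \geq \dim(P)$, which is \eqref{ecu:jacob}.

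For the equality statement I would argue both directions from the chain just displayed. If $\mathbf{a}$ is a regular point, then by definition $A$ is a regular local ring, so (i) gives $\dim_\Bbbk \mathfrak{m}/\mathfrak{m}^2 = \dim A$, and (ii) gives that $A$ is a domain, so $\mathbf{a}$ lies on a single irreducible component of $\mathcal{V}(I)$; that component must be $\mathcal{V}(P)$ because $\mathbf{a} \in \mathcal{V}(P)$, whence $\dim A = \dim(P)$ and equality holds in \eqref{ecu:jacob}. Conversely, if equality holds then $\dim(P) = \dim_\Bbbk \mathfrak{m}/\mathfrak{m}^2 \geq \dim A \geq \dim(P)$ forces $\dim A = \dim_\Bbbk \mathfrak{m}/\mathfrak{m}^2$, so $A$ is regular and $\mathbf{a}$ is a regular point of $\mathcal{V}(I)$.

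The main obstacle is not any single computation but marshalling the right local-algebra inputs and getting the bookkeeping exactly right: verifying that the Jacobian row space is precisely the kernel of $\mathfrak{m}_{\mathbf{a}}/\mathfrak{m}_{\mathbf{a}}^2 \to \mathfrak{m}/\mathfrak{m}^2$, keeping the Krull-dimension/embedding-dimension comparison valid over a not-necessarily-algebraically-closed base field (which is exactly why "regular point" must be defined through the local ring $A$ and why the field-independence of $\dim$ is needed), and correctly handling the degenerate situations — $\mathbf{a}$ lying on several components, or on one of dimension strictly larger than $\dim(P)$ — in which the inequality is strict and $\mathbf{a}$ is properly detected as non-regular. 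Since the result is precisely \cite[Theorem 5.7.1]{Singular-book}, in the final write-up I would cite that source and include the short derivation above only to keep the statement self-contained.
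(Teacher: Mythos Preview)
Your proposal is correct and, at its core, coincides with the paper's approach: the paper's entire proof is a one-line citation of \cite[Theorem~5.7.1]{Singular-book}, together with the translation $n-\dim(P)=\mathrm{ht}(P)$ via \cite[Theorem~3.5.1(4)]{Singular-book} and the definition of regular point from \cite[Definition~A.8.7]{Singular-book}. You arrive at the same citation but additionally unpack the argument (cotangent-space computation via the Jacobian, the Krull/embedding-dimension inequality, and the domain property of regular local rings), which is more than the paper provides but entirely consistent with it.
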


\begin{proof}
This theorem is nothing but \cite[Theorem 5.7.1]{Singular-book}, from taking into account that $n-\dim(P)$ is the height of $P,\ \mathrm{ht}(P),$ by \cite[Theorem 3.5.1(4)]{Singular-book} and the definition of regular point given in \cite[Definition A.8.7]{Singular-book}.
\end{proof}

The left hand side in \eqref{ecu:jacob} can be computed in SINGULAR with the following command \texttt{rank(reduce(jacob(I),std(m\_a)))}, where $\mathfrak{m}_\mathbf{a}$ is the maximal ideal associated with $\mathbf{a}$, i.e., $\mathfrak{m}_\mathbf{a} = \langle x_1 - a_1, \ldots, x_n - a_n \rangle$.

\section{Proof of the main results}\label{sec:main}

\noindent
In this section, we shall prove Theorem~\ref{teo:A} and Theorem~\ref{teo:B}. 

A first consideration is that
the functions $A$ and $B$ are homogeneous trigonometric polynomials of degrees $6$ and $3$, respectively. 
Since $\sin(\theta)=-\sin(\theta+\pi)$ and $\cos(\theta)=-\cos(\theta+\pi)$, then for all $\theta\in (-\pi/2,\pi/2]$
\[
A(\theta)=A(\theta+\pi),\quad B(\theta)=-B(\theta+\pi).
\]
In particular, $B$ has definite sign if and only if $B(\theta)\equiv 0$ for all $\theta\in(-\pi/2,\pi/2]$,
and $A$ has definite sign if and only if $A(\theta)\geq 0$ for all $\theta\in(-\pi/2,\pi/2]$, or
$A(\theta)\leq 0$ for all $\theta\in(-\pi/2,\pi/2]$.

\medskip

By the changes of variables $x=\tan(\theta)$, we obtain that $A$ has definite sign if and only if
the rational function
\[
A(\atan(x))=\frac{p_1(x)\Big(a_1p_1(x)-p_2(x)\Big)}{\left(1+x^2\right)^3} 
\]
has definite sign, where (we recall)
\[\begin{split}
p_1(x)&=a_2+(3 a_3+a_4) x-(3 a_2+a_5) x^2-a_6 x^3,\\
p_2(x)&=- a_3 + (3 a_2 + a_5) x + (2 a_3 + a_4 + a_6) x^2 - a_2  x^3,
\end{split}\]
or equivalently, that $p_1(x) \Big(a_1p_1(x)-p_2(x)\Big)$ has definite sign.

\medskip

Analogously, by the change of variable $x=\tan(\theta)$, $B$ is identically null
if and only if 
\[
B(\atan(x))=\frac{q(x)}{\left(1+x^2\right)^{3/2}}\equiv 0,\quad \text{for all }x\in\mathbb{R},
\]
where

\[
\begin{split}
q(x)=&-\Big(2a_1a_2+4a_3+a_4\Big)+\Big(12a_2+3a_5-a_1(6a_3+2a_4)\Big)x\\
&+\Big(8a_3+3a_4+4a_6+a_1(6a_2+2a_5)\Big)x^2-\Big(4a_2+a_5-2a_1a_6\Big)x^3.
\end{split}
\]
Again, that is equivalent to $q(x)\equiv 0$.

\subsection{Proof of Theorem \ref{teo:A}}
We divide the proof of Theorem~\ref{teo:A} into several propositions.
A first comment is that if $p_1(x)\equiv 0$ or $p_3(x):=a_1 p_1(x)-p_2(x)\equiv 0$ then 
$A(\theta)\equiv 0$. 
In Proposition~\ref{propo:Anull} we characterize when one of the polynomials $p_1$, $p_3$
is identically null. 
Next, we distinguish cases in terms of the minimum of the degrees
of $p_1$ and $p_3$. When this minimum is zero, Theorem~\ref{teo:A}
is proved in Proposition~\ref{prop:Ag0}; when it is one, in Proposition~\ref{prop:Ag1}; when it is two, in Proposition~\ref{prop:Ag2}; and when it is three, in Proposition~\ref{prop:g33r1s}.


\begin{prop}\label{propo:Anull}
The polynomial $p_1p_3$ is identically null if and only if 
\begin{equation}\label{eq:1}
a_6=a_5=3a_3+a_4=a_2=0, 
\end{equation}
or
\begin{equation}\label{eq:2}
\begin{split}
a_1a_6-a_2=a_1a_5-a_3+a_4+a_6 =  0, \\
a_1(3 a_3+ a_4)-3a_2-a_5=a_1a_2+a_3 = 0.
\end{split}
\end{equation}

\end{prop}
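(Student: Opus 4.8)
The plan is to prove this by a direct case analysis on which of the two polynomials vanishes, noting that the product $p_1 p_3$ is identically null if and only if $p_1 \equiv 0$ or $p_3 \equiv 0$ (over the field of rational functions, a product of polynomials is zero iff one factor is zero). First I would handle $p_1 \equiv 0$: reading off the coefficients of $p_1(x)=a_2+(3a_3+a_4)x-(3a_2+a_5)x^2-a_6x^3$ with respect to the powers of $x$, the condition $p_1\equiv 0$ is exactly $a_2 = 0$, $3a_3+a_4 = 0$, $3a_2+a_5 = 0$, $a_6 = 0$, which simplifies (using $a_2=0$) to $a_2 = a_5 = 3a_3+a_4 = a_6 = 0$. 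That is precisely condition \eqref{eq:1}.

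Next I would handle $p_3 \equiv 0$, where $p_3 = a_1 p_1 - p_2$ with $p_2(x) = -a_3 + (3a_2+a_5)x + (2a_3+a_4+a_6)x^2 - a_2 x^3$. Expanding $a_1 p_1 - p_2$ and collecting by powers of $x$ gives the four coefficient equations: constant term $a_1 a_2 + a_3 = 0$; coefficient of $x$: $a_1(3a_3+a_4) - (3a_2+a_5) = 0$; coefficient of $x^2$: $-a_1(3a_2+a_5) - (2a_3+a_4+a_6) = 0$; coefficient of $x^3$: $-a_1 a_6 + a_2 = 0$. The first and fourth of these are literally $a_1 a_2 + a_3 = 0$ and $a_1 a_6 - a_2 = 0$, matching two of the equations in \eqref{eq:2}. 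For the remaining two equations in \eqref{eq:2}, namely $a_1 a_5 - a_3 + a_4 + a_6 = 0$ and $a_1(3a_3+a_4) - 3a_2 - a_5 = 0$: the latter is exactly the coefficient of $x$ above. For the former, I would show that the $x^2$-coefficient equation $-a_1(3a_2+a_5) - (2a_3+a_4+a_6) = 0$, combined with the $x$-coefficient equation, is equivalent (possibly after adding a multiple of another equation) to $a_1 a_5 - a_3 + a_4 + a_6 = 0$. Concretely: from the $x$-coefficient, $3a_2 + a_5 = a_1(3a_3+a_4)$; substituting into the $x^2$-coefficient gives $-a_1^2(3a_3+a_4) - 2a_3 - a_4 - a_6 = 0$, and one then checks this matches $a_1 a_5 - a_3 + a_4 + a_6 = 0$ modulo the ideal generated by the first two relations — i.e., the two systems generate the same ideal. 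This is the kind of verification that is cleanest to do (and is presumably done in the paper) via a Gröbner basis / ideal-equality check in SINGULAR, confirming that the ideal generated by the four raw coefficient equations of $p_3$ equals the ideal generated by the four equations in \eqref{eq:2}.

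Finally I would assemble: $p_1 p_3 \equiv 0$ iff ($p_1 \equiv 0$) or ($p_3 \equiv 0$) iff \eqref{eq:1} or \eqref{eq:2} holds. I expect the main (though still modest) obstacle to be the bookkeeping in the $p_3 \equiv 0$ case: the raw coefficient equations are not literally the stated ones, so one must verify the ideal equality rather than just pattern-match, and it is worth being careful that the stated form \eqref{eq:2} is indeed a faithful rewriting (not, say, a proper subvariety or a slightly larger one). Everything else is a routine coefficient comparison.
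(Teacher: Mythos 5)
Your proposal is correct and follows essentially the same route as the paper: the paper also splits $p_1p_3\equiv 0$ into $p_1\equiv 0$ or $p_3\equiv 0$, takes the ideals generated by the coefficients, and rewrites them into the stated forms, only it delegates that rewriting to a minimal-generators computation in SINGULAR rather than doing it by hand. Your hand check of the only nontrivial point is fine, and is in fact even easier than you suggest, since the raw $x^2$-coefficient of $p_3$ equals $\bigl(a_1a_5-a_3+a_4+a_6\bigr)+3\bigl(a_1a_2+a_3\bigr)$ up to sign, so the two systems generate the same ideal by an explicit linear combination.
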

\begin{proof}
If suffices to consider the ideals generated by the coefficients of 
the polynomials, and then, for each of these ideals, compute a minimal system of generators. (See Appendix~\ref{ap:A}).
\end{proof}

In the following, we assume that neither of $p_1$, $p_3$ is identically null. 
In consequence, $p_1p_3$ has definite sign if and only if
the odd-multiplicity real roots of $p_1$ and $p_3$ coincide. 
We shall distinguish several cases depending on the minimum degree of $p_1$ and $p_3$.

\medskip

If the minimum degree of $p_1$ and $p_3$ is zero (and neither of $p_1,p_2,p_3$ is identically null), then
$A$ does not have definite sign.

\begin{prop}\label{prop:Ag0}
If $p_1$ and $p_3$ are not identically null and $p_1$ or $p_3$ is constant, then the odd-multiplicity real roots of 
$p_1$ and $p_3$ do not coincide.
\end{prop}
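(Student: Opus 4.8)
The plan is to exploit the fact that a nonzero constant polynomial has no real roots at all. Since the hypothesis ``$p_1$ and $p_3$ are not identically null'' rules out the zero polynomial, whichever of $p_1$, $p_3$ is constant is a \emph{nonzero} constant, so its set of odd-multiplicity real roots is empty. Hence it suffices to show that the \emph{other} polynomial has a root of odd multiplicity, for then the two finite root-sets cannot coincide. I would use throughout the elementary observation that a nonzero real polynomial has a root of odd multiplicity if and only if it changes sign on $\mathbb{R}$; in particular every polynomial of odd degree qualifies, since its values at $+\infty$ and $-\infty$ have opposite signs. The proof then splits into two cases according to which of $p_1$, $p_3$ is the constant one.

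First, suppose $p_1$ is a nonzero constant. Then the vanishing of the coefficients of $x^3$, $x^2$, $x$ in $p_1$ forces $a_6=0$, $3a_2+a_5=0$, $3a_3+a_4=0$, while $a_2\neq 0$. Now the coefficient of $x^3$ in $p_3=a_1p_1-p_2$ equals $a_2-a_1a_6=a_2\neq 0$, so $p_3$ has degree three; being a cubic, it changes sign and thus has a root of odd multiplicity, while $p_1$ has none. (Incidentally this already shows $p_1$ and $p_3$ cannot both be constant, so the two cases below are genuinely exhaustive.)

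Second, suppose $p_1$ is not constant but $p_3$ is a nonzero constant. If $\deg p_1\in\{1,3\}$ then $p_1$ changes sign and we are done, so the only remaining case is $\deg p_1=2$, i.e. $a_6=0$ and $3a_2+a_5\neq 0$. Expanding $p_3=a_1p_1-p_2$, its coefficient of $x^3$ is $a_2-a_1a_6=a_2$, which must vanish; hence $a_2=0$ and $x=0$ is a root of $p_1$. This root is simple unless $p_1'(0)=3a_3+a_4=0$; but if $3a_3+a_4=0$ then the coefficient of $x$ in $p_3$, namely $a_1(3a_3+a_4)-(3a_2+a_5)=-(3a_2+a_5)$, would also have to vanish, contradicting $\deg p_1=2$. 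So $x=0$ is a simple root of $p_1$, $p_1$ changes sign there, and once more the odd-multiplicity root-sets of $p_1$ and $p_3$ differ.

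The only step needing real care is the subcase $\deg p_1=2$ in the second case: unlike the odd-degree situations, a nonempty odd-multiplicity root-set is not automatic, and one must feed in the equations cutting out ``$p_3$ is constant'' to exclude a double root at the origin. All the coefficient identities used are completely elementary, and if desired they can be double-checked with the symbolic computations collected in Appendix~\ref{ap:A}.
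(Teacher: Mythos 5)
Your proof is correct and follows essentially the same route as the paper's: both arguments reduce to elementary coefficient-chasing, using that a nonzero constant has no odd-multiplicity roots, that the leading coefficient of $p_3$ is $a_2-a_1a_6$, and that the subcase $3a_3+a_4=0$ is incompatible with $p_3$ constant and $p_1$ nonzero. The only difference is organizational (you argue directly via a sign-changing simple root, the paper by contradiction), so nothing substantive distinguishes the two.
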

\begin{proof}
Assume $p_1$ is constant, i.e.,  $p_1(x)=a_2$. If the odd-multiplicity roots of $p_1,p_3$ coincide, then
$p_3$ has even degree. Hence $a_2=0$, in contradiction with $p_1$ not being null.

Conversely, if $p_3$ is constant and not null, then $p_3(x)= a_1a_2+a_3$. Arguing as above, $p_1$ has an even degree,
so $a_6=0$. Moreover, since $p_3(x)$ is constant, $a_2=0$ in particular, and
\[p_1(x)= x(3a_3+a_4-a_5x).\]
I.e., $x=0$ is a root of $p_1$. If it is a simple root, it should be a root of $p_3$, in contradiction with $p_3$ being constant, so that $3a_3+a_4=0$. But in this case, 
\[
p_3(x)=a_3 - a_5 x + \left(-2 a_3 - a_4 - a_1 a_5 \right) x^2.
\]
In particular, $a_5=0$, so $p_1(x)\equiv 0$, and with this contradiction we conclude the proof. 
\end{proof}


Next, we consider that one of $p_1,p_3$ has degree one, and the other
has an equal or greater degree.

\begin{prop}\label{prop:Ag1}
Assume that the minimum of the degrees of $p_1$ and $p_3$ is one. 
Then the odd-multiplicity real-roots of $p_1,p_3$ coincide
if and only if 
\begin{equation}\label{prop:Ag1-1}
a_6=3a_2+a_5=R_2=0,\ a_2^2\leq 4a_3^2 +4 a_1 a_2 a_3,\ a_2(3a_3+a_4)\neq 0,
\end{equation}
or
\begin{equation}\label{prop:Ag1-2}
\begin{split}
& a_2-a_1a_6=2a_3+a_4+a_1(3a_2+a_5)+a_6=R_2=0,\\
& D_1\leq 0,\ a_6\left(3a_2-a_1(3a_3+a_4)+a_5\right)R_{113}\neq 0.	                                    
\end{split}
\end{equation}
or 
\begin{equation} \label{prop:Ag1-3}
4a_4-9a_6=4a_3+5a_6=9a_2+a_5=9a_1a_6+a_5=8a_1^2-1=0,\ a_6\neq0.
\end{equation}
\end{prop}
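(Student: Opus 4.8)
\emph{Proof plan.}
The plan is to split the analysis according to which of $p_1,p_3$ is the linear member of the pair, and, within each branch, to reduce ``the odd-multiplicity real roots of $p_1$ and $p_3$ coincide'' (equivalently, ``$p_1p_3$ has definite sign'') to the sign of the discriminant of the quadratic obtained by dividing the cubic of the pair by the linear one. Since the two branches, together with the sub-cases inside them, exhaust all possibilities, both directions of each ``if and only if'' come out at once, and the open (degree) conditions recorded in \eqref{prop:Ag1-1}--\eqref{prop:Ag1-3} are exactly those that place us in the relevant branch.

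\textbf{Branch A: $\deg p_1=1$}, equivalently $a_6=3a_2+a_5=0$ and $3a_3+a_4\neq0$. Then $p_1$ has a unique simple real root $x_0$, so coincidence forces $p_3(x_0)=0$ and forces every other real root of $p_3$ to have even multiplicity. If $\deg p_3\le 2$ (equivalently $a_2=0$, whence also $a_5=0$), a direct inspection of the resulting explicit forms of $p_1,p_3$ shows, as in Propositions~\ref{propo:Anull} and \ref{prop:Ag0}, that $p_3(x_0)=0$ together with the absence of further odd-multiplicity roots is impossible, so no coincidence occurs. There remains $\deg p_3=3$, i.e. $a_2\neq0$. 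Here $p_1$ and $p_3$ share a root if and only if $\res(p_1,p_3)=0$, which on this stratum reduces to $R_2=0$ (the complementary factor $R_1$ being a sum of squares that does not vanish when $a_2\neq0$), and this is necessary for coincidence. Assuming $R_2=0$, polynomial division gives $p_3=p_1\,s$ with $s$ a genuine quadratic, so $p_1p_3=p_1^2\,s$ has definite sign if and only if $s$ does, i.e. if and only if $\disc(s)\le0$ (the equality case, where $p_3$ is $p_1$ times a perfect square, being allowed). A computation of $\disc(s)$, simplified modulo the relation $R_2=0$, rewrites this inequality as $a_2^2\le 4a_3^2+4a_1a_2a_3$; together with $a_2(3a_3+a_4)\neq0$ this is exactly~\eqref{prop:Ag1-1}.

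\textbf{Branch B: $\deg p_3=1$, $\deg p_1\ge 2$.} The condition $\deg p_3=1$ reads $a_2-a_1a_6=2a_3+a_4+a_1(3a_2+a_5)+a_6=0$ together with $\ell:=3a_2-a_1(3a_3+a_4)+a_5\neq0$, and $p_3$ then has a unique simple real root $y_0$. The sub-case $\deg p_1=2$ (which forces $a_6=0$, hence $a_2=a_5=0$) is empty: the two real roots of the quadratic $p_1$ cannot collapse onto $y_0$ without $p_1$ becoming a perfect square, whose set of odd-multiplicity real roots is empty. So $\deg p_1=3$, i.e. $a_6\neq0$; a short elimination shows that on this locus $R_1=0$ would force $\ell=0$, contradicting $\deg p_3=1$, hence again $\res(p_1,p_3)=0\iff R_2=0$, which is necessary for coincidence. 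Imposing $R_2=0$ — equivalently $p_3\mid p_1$ here — we get $p_1=p_3\,t$ with $t$ a genuine quadratic, so $p_1p_3=p_3^2\,t$ and coincidence holds if and only if $\disc(t)\le0$. One now splits on $R_{113}=\res(p_1',p_3)$, which vanishes precisely when $y_0$ is a multiple root of $p_1$, i.e. when $t(y_0)=0$: on $\{R_{113}\neq0\}$ one has $\res(p_3,t)\neq0$ and $D_1=\disc(p_1)$ equals $\disc(t)$ times the nonvanishing square $\res(p_3,t)^2$, so coincidence $\iff\disc(t)\le0\iff D_1\le0$, which together with $a_6\neq0$, $\ell\neq0$, $R_{113}\neq0$ is~\eqref{prop:Ag1-2}; on $\{R_{113}=0\}$, $y_0$ is a double root of $p_1$ and $\disc(t)\le0$ then forces $y_0$ to be a \emph{triple} root of $p_1$, i.e. $D_1=D_1'=0$ on top of the Branch~B equations and $R_2=0$, and a primary-decomposition computation (Section~\ref{sec:3}, Appendix~\ref{ap:A}) identifies the resulting component with the codimension-five curve~\eqref{prop:Ag1-3}.

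\textbf{Main obstacle.} The case exhaustion and the empty sub-cases are routine; the delicate point is the bookkeeping of the two discriminant-sign equivalences — $\disc(s)\le0\iff a_2^2\le 4a_3^2+4a_1a_2a_3$ modulo $R_2=0$ in Branch A, and $D_1\le0\iff\disc(t)\le0$ modulo $R_{113}\neq0$ in Branch B — where one must use the relations defining the respective strata to tame the resultant and discriminant expressions and carefully track the signs of the leading-coefficient factors produced by the divisions. Pinning down the last component as precisely~\eqref{prop:Ag1-3}, rather than a larger variety, likewise relies on the computational-algebra tools of Section~\ref{sec:3}.
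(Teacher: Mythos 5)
Your proposal is correct and follows essentially the paper's strategy: split according to which of $p_1,p_3$ is linear, force the other to have degree three (parity of the number of odd-multiplicity real roots), reduce the common-root condition to $R_2=0$, decide coincidence by the sign of the discriminant of the quadratic cofactor, and pin down the triple-root locus by an elimination/primary-decomposition computation yielding \eqref{prop:Ag1-3}. The one genuine deviation is in your Branch B: the paper organizes that case by the sign of $D_1$ and therefore needs a separate, fairly heavy computation to show that $D_1<0$ together with the stratum equations and $R_2=0$ forces $R_{113}\neq0$; you split on $R_{113}$ instead and use $\disc(p_3t)=\disc(t)\res(p_3,t)^2$ (with $\disc(p_3)=1$) to get $D_1\le0\iff\disc(t)\le0$ on $\{R_{113}\neq0\}$, which makes that computation unnecessary -- a small but real simplification. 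The paper also obtains the $R_2$-criterion by evaluating $p_3$ (resp.\ $p_1$) at the explicit root of the linear polynomial rather than via $\res(p_1,p_3)=R_1R_2$; your route is equivalent, but since $R_1R_2$ and $R_{113}$ are the \emph{generic} (degree-$3$) resultants, on the degenerate strata you should note explicitly that the degree drop only contributes nonvanishing leading-coefficient factors (this is what the paper's identity $R_{113}=9a_6^2(\cdots)^2\,p_1'(x_0)$ records). Two minor blemishes: in the sub-case $\deg p_1=2$ of Branch B the parenthetical ``hence $a_2=a_5=0$'' is wrong ($a_2=0$ but $a_5\neq0$ there), though your parity conclusion that this sub-case yields no coincidence stands; and for \eqref{prop:Ag1-3} one should, as the paper does, verify that points of that locus really produce a triple root (there $p_1=-a_6(x\pm1/\sqrt2)^3$, $p_3=\tfrac{9\sqrt{2}}{8}a_6(x\pm1/\sqrt2)$), since elimination only identifies the Zariski closure of the projected triple-root locus.
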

\begin{proof}
Assume the odd-multiplicity real-roots of $p_1,p_3$ coincide.
Then the possible degrees of $p_1,p_3$ are one or three. 
The polynomials $p_1$ and $p_3$ can not be simultaneously linear, since in this case $p_1(x)=a_3x$ and $p_3(x)= a_3+a_1a_3x$.

\textsc{Case 1.}
Assume that $p_1$ has degree one and $p_3$ has degree three. 
Then $a_6=0$, $3a_2+a_5=0$, $3a_3+a_4\neq0$, and $a_2\neq 0$. 
Moreover, 
\begin{equation*}
\begin{split}
p_1(x)&= a_2+ (3 a_3 + a_4) x, \\
p_3(x)&= a_1a_2+a_3+a_1(3a_3+a_4)x-(2a_3+a_4)x^2+a_2x^3.	
\end{split}
\end{equation*}
Assume that the odd-multiplicity roots of $p_1$ and $p_3$ coincide.
The root of $p_1$ is $x_1=-a_2/(3a_3+a_4)$. 
Then
\[
p_3(x_1)=-\frac{(-a_2^2+a_3(3a_3+a_4))(a_2^2+(3a_3+a_4)^2)}{(3a_3+a_4)^3}=0.
\]
Hence
\[
0=a_2^2-a_3(3a_3+a_4)=\frac{R_2}{a_2^2}.
\]
In consequence $a_3\neq 0$. Replacing $a_4$ by $\frac{a_2^2-3a_3^2}{a_3}$, we obtain
\[
p_1(x)=\frac{a_2 (a_3 + a_2 x)}{a_3},\quad p_3(x)=\frac{(a_3 + a_2 x) (a_1 a_2 + a_3 - a_2 x + a_3 x^2)}{a_3}.
\]
The odd-multiplicity roots of $p_1$ and $p_3$ coincide if and only if
$a_1 a_2 + a_3 - a_2 x + a_3 x^2$ has no simple roots, i.e.,   
\[
a_2^2 - 4 a_1 a_2 a_3 - 4 a_3^2\leq 0.
\]

The converse is obvious.
\medskip

\textsc{Case 2}. Assume that $p_3$ has degree one and $p_1$ has degree three, or equivalently 
\begin{equation*} \begin{split}
		& a_2=a_1a_6,\ 2a_3+a_4+a_1(3a_2+a_5)+a_6=0,\\
		& a_6\neq0,\ 3a_2-a_1(3a_3+a_4)+a_5 \neq0.
\end{split}
\end{equation*}
Assume that the odd-multiplicity real roots of $p_1$ and $p_3$ coincide.
From $a_2= a_1 a_6$, $a_4= -2 a_3 - a_1 a_5 - a_6 - 3 a_1^2 a_6$, we obtain 
\[
p_3(x)=a_3 + a_1^2 a_6 - (a_5 + a_1 (-a_3 + a_1 a_5 + 4 a_6 + 3 a_1^2 a_6)) x.
\]
where $a_5 + a_1 (-a_3 + a_1 a_5 + 4 a_6 + 3 a_1^2 a_6)\neq 0$. Therefore, $p_3$ has the unique root 
\[
x_0=\frac{a_3 + a_1^2 a_6}{a_5 + a_1 \left(-a_3  + a_1 a_5 + 4  a_6 + 3 a_1^2 a_6\right)}.\]
As $p_1$ and $p_3$ have the same odd-multiplicity real roots, $x_0$ must be a root of $p_1$.
Substituting, one has
\[
p_1(x_0)=\frac{-R_2 \left( (a_5 + 4 a_1 a_6)^2 + (a_3 - a_1 a_5 - 3 a_1^2 a_6)^2 \right)}
{a_6^2 \left(a_5 + a_1 \left(-a_3  + a_1 a_5 + 4  a_6 + 3 a_1^2 a_6\right)\right)^3}.
\]
Since $a_5 - a_1 (a_3 - a_1 a_5 - 4 a_6 - 3 a_1^2 a_6)\neq 0$, then 
$(a_5 + 4 a_1 a_6)^2 + (a_3 - a_1 a_5 - 3 a_1^2 a_6)^2>0$. Therefore
$x_0$ is a root of $p_1$ if and only if $R_2=0$. 

\medskip

If $D_1<0$, we shall prove that $R_{113}\neq 0$, so that \eqref{prop:Ag1-2} holds.
Assume by contradiction that $R_{113}=0$. Consider the ideal generated by $D_1+x^2$ (which implies $D_1<0$ if $x\neq 0$), $a_2-a_1a_6$, $2a_3+a_4+a_1(3a_2+a_5)+a_6$, and $R_{113}$. This ideal has three associated primes (see Appendix~\ref{ap:A} - the computations take some time in this case). The first one contains the polynomial $x$, so that it corresponds to $D_1=0$. The second contains the polynomial $3a_2-a_1(3a_3+a_4)+a_5$.
The third contains $1+a_1^2$ so that it has no real points. Therefore, the variety 
of the ideal is contained in $3a_2-a_1(3a_3+a_4)+a_5=0$.  But $3a_2-a_1(3a_3+a_4)+a_5\neq 0$ by hypothesis. 
This contradiction proves that $R_{113}\neq 0$.

If $D_1=0$, the multiplicity of $x_0$ as a root of $p_1$ must be one or three. 
The multiplicity is two or more if and only if $p_1'(x_0)=0$, but
\[
R_{113}=9 a_6^2 \left(a_5 + a_1 \left(-a_3  + a_1 a_5 + 4  a_6 + 3 a_1^2 a_6\right)\right)^2 p_1'(x_0). 
\]
I.e., the multiplicity is one if and only if $R_{113} \neq 0$.
Finally, if the multiplicity is three, then $p_1(x)=-a_6(x-a)^3$ for a certain $a$.
We consider the ideal generated by $R_2$, $a_2-a_1a_6$, $2a_3+a_4+a_1(3a_2+a_5)+a_6$, and the coefficients of $p_1(x)+a_6(x-a)^3$. Eliminating $a$, and computing the minimal associated primes, we obtain 
three ideals. The first one contains $1+a_1^2$ so that it has no real points in its variety. The second 
contains the polynomial $a_6$, and, since by hypothesis $a_6\neq 0$, it has no real points in its variety.
The third is
\[
\begin{split}
&8 a_5^2-81 a_6^2=
4 a_4-9 a_6=
9 a_1 a_6+a_5=
8 a_1 a_5+9 a_6=
8 a_1^2-1=0,\\
&3 a_1^2 a_6+ a_1 a_5+2 a_3+ a_4+ a_6=-a_1 a_6+a_2=0.
\end{split}
\]
Computing a minimal system of generators, 
we obtain 
\begin{equation}\label{eq:1m3}
4a_4-9a_6=4a_3+5a_6=9a_2+a_5=9a_1a_6+a_5=8a_1^2-1=0. 
\end{equation}
To conclude, note that if \eqref{eq:1m3} holds then 
\[
p_1(x)=-a_6\left(x\pm \frac{1}{\sqrt{2}} \right)^3,\quad p_3(x)=\frac{9\sqrt{2}}{8}a_6\left(x\pm \frac{1}{\sqrt{2}} \right).
\]
\end{proof}

Now, we consider that either $p_1$ or $p_3$ has degree two (and the other degree is two or more).

\begin{prop}\label{prop:Ag2}
Assume that the minimum of the degrees of $p_1$ and $p_3$ is two. Then the odd-multiplicity real roots of $p_1,p_3$  
coincide if and only if 
\begin{equation}\label{eq:6}
a_2=a_6=3a_3+a_4=0,\ 4 a_3^2 - 4 a_1 a_3 a_5 \geq a_5^2>0,\ a_3-a_1 a_5\neq 0.
\end{equation}
\end{prop}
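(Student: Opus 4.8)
The plan is to prove the proposition by elementary considerations on the degrees and the multiplicities of the real roots of $p_1$ and $p_3$; the computational tools of Section~\ref{sec:3} are not needed here. As recalled just before the statement, since neither $p_1$ nor $p_3$ is identically null, $A$ has definite sign if and only if the set of odd-multiplicity real roots of $p_1$ coincides with that of $p_3$, so this coincidence is what I must characterize under the hypothesis $\min(\deg p_1,\deg p_3)=2$.

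First I would split according to the pair $(\deg p_1,\deg p_3)$, which can only be $(2,2)$, $(2,3)$ or $(3,2)$. The cases $(2,3)$ and $(3,2)$ are ruled out by a parity count: a real cubic always has an odd number (one or three) of odd-multiplicity real roots, whereas a real quadratic always has an even number (zero or two); hence the two sets cannot coincide, and these cases do not occur.

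It remains to treat $\deg p_1=\deg p_3=2$. Here $\deg p_1=2$ forces $a_6=0$ and $3a_2+a_5\neq 0$, and then $\deg p_3=2$ forces the leading coefficient $a_2-a_1a_6=a_2$ of $p_3$ to vanish, whence $a_2=0$ and $a_5\neq 0$; moreover the $x^2$-coefficient $-a_1a_5-2a_3-a_4$ of $p_3$ is nonzero. With $a_2=a_6=0$ one gets $p_1(x)=x\big((3a_3+a_4)-a_5x\big)$ and $p_3(x)=a_3+\big(a_1(3a_3+a_4)-a_5\big)x+(-a_1a_5-2a_3-a_4)x^2$. If $3a_3+a_4\neq 0$, then $p_1$ has the two distinct simple roots $0$ and $(3a_3+a_4)/a_5$, so $p_3$ (a genuine quadratic) would have to have exactly these two as its roots; imposing $p_3(0)=0$ gives $a_3=0$, and matching the second root then yields $a_4^2=-a_5^2$, impossible since $a_5\neq 0$. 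Therefore $3a_3+a_4=0$, whence the $x^2$-coefficient of $p_3$ becomes $a_3-a_1a_5$ (nonzero), $p_1(x)=-a_5x^2$ has no odd-multiplicity real root, and $p_3(x)=(a_3-a_1a_5)x^2-a_5x+a_3$ must have none either; being a genuine quadratic, this happens exactly when its discriminant is nonpositive, i.e.\ $a_5^2-4a_3(a_3-a_1a_5)\le 0$, equivalently $4a_3^2-4a_1a_3a_5\ge a_5^2$. Collecting $a_2=a_6=3a_3+a_4=0$, $a_5\neq 0$ (so $a_5^2>0$), $a_3-a_1a_5\neq 0$ and the last inequality gives precisely \eqref{eq:6}. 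The converse is immediate: if \eqref{eq:6} holds then $\deg p_1=\deg p_3=2$, both polynomials have empty set of odd-multiplicity real roots, and these sets trivially coincide.

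The argument is essentially routine; the only delicate point is the sub-case $3a_3+a_4\neq 0$ within $(\deg p_1,\deg p_3)=(2,2)$, where one must rule out that the quadratic $p_3$ shares the two simple roots of $p_1$. I expect that short computation, ending in the contradiction $a_4^2=-a_5^2$, to be the main (and fairly minor) obstacle.
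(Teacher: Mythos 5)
Your proof is correct and follows essentially the same route as the paper: reduce to the case $\deg p_1=\deg p_3=2$ (the paper dispatches the mixed-degree cases with the same parity observation you spell out), force $a_3=0$ and then a contradiction when $3a_3+a_4\neq 0$ (the paper gets $p_3\bigl((3a_3+a_4)/a_5\bigr)=-a_4(a_4^2+a_5^2)/a_5^2=0$, hence $a_4=0$, which is the same contradiction you phrase as $a_4^2=-a_5^2$), and finally the nonpositive-discriminant condition on $p_3$ when $3a_3+a_4=0$. Your explicit handling of the case $\disc(p_3)=0$ in the converse is in fact slightly more careful than the paper's wording.
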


\begin{proof}
Assume that the minimum of the degrees of $p_1$ and $p_3$ is two and the real odd-multiplicity roots of $p_1,p_3$ coincide. Note that this implies that they are both of degree two. I.e., $a_6=a_2=0$, $a_5\neq 0$, $2a_3+a_4+a_1a_5\neq0$, and
\begin{equation*}
\begin{split}
    p_1(x)&= (3 a_3 + a_4) x - a_5 x^2,\\
    p_3(x)&= a_3 +(a_1(3a_3+a_4)-a_5) x - (2 a_3 + a_4 + a_1a_5) x^2.    
\end{split}
\end{equation*}    

The roots of $p_1$ are then $x_1=0$ and $x_2=(3 a_3 + a_4)/a_5$.

Assume that $3 a_3 + a_4\neq 0$. As the simple real roots of $p_1$ must be roots of $p_3$, we have that $p_3(0)=0$ 
which implies $a_3=0$. Moreover, evaluating $p_3$ at $x_2$, we obtain
\[
p_3\left(\frac{3a_3+a_4}{a_5}\right)=-\frac{a_4 (a_4^2 + a_5^2)}{a_5^2}=0.
\]
I.e., $a_4=0$. But this is contradictory with $3 a_3 + a_4\neq 0$.

\medskip

If $3a_3+a_4=0$ then $p_1(x)=-a_5x^2$ has no odd-multiplicity real roots. The discriminant of $p_3$,
replacing $a_4$ by $-3a_3$, is
\[
\disc(p_3)=-4 a_3^2 + 4 a_1 a_3 a_5 + a_5^2,
\]
so that $p_3$ has no simple real roots if and only if $4 a_3^2 - 4 a_1 a_3 a_5 - a_5^2\geq 0$.
Finally, note that if $3a_3+a_4=0$ then the condition $2a_3+a_4+a_1a_5\neq0$ is equivalent to $a_3-a_1a_5\neq0$. 

Conversely, assume that \eqref{eq:6} holds.  Then $p_1(x)=-a_5 x^2$ and $p_3(x)= a_3-a_5x+ (a_3-a_1a_5)x^2$. Since $\disc (p_3)<0$, both $p_1$ and $p_3$ have no odd real roots.
\end{proof}

In the remainder of this subsection, we shall consider that both $p_1$ and $p_3$ have degree three. 
In this case, the number of real odd-multiplicity roots is given by the discriminant, being
three if the discriminant is strictly positive and one if the discriminant is negative.
Note that if $D_1\leq 0$ and $D_3>0$, or $D_1> 0$ and $D_3\leq 0$, then 
the odd-multiplicity roots of $p_1,p_3$ do not coincide
since one has three simple roots and the other has one root with odd-multiplicity. Consequently,
we only need to consider the cases $D_1,D_3>0$ or $D_1,D_3\leq 0$.

Firstly, we consider the case when $p_1,p_3$ have three simple roots, for which we prove that the real roots can not coincide. The following result is a little more
general since we do not impose the condition that the real roots be simple. It will be used in proving other cases.

\begin{prop}\label{prop:g33r3}
If $p_1,p_3$ have three real roots then the roots do not coincide (with multiplicity).
\end{prop}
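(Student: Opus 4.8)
The plan is to show the hypothesis forces $p_1$ and $p_3$ to be non‑proportional, which is exactly the assertion: since $p_1$ and $p_3$ both have degree three (in this part of the subsection $a_6\neq 0$ and $a_2-a_1a_6\neq 0$), two such cubics share the same three roots counted with multiplicity if and only if one is a nonzero scalar multiple of the other. So I would argue by contradiction: assume $p_3=\lambda\,p_1$ for some $\lambda\in\mathbb{R}\setminus\{0\}$ and deduce that $p_1$ cannot have three real roots.

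Because $p_3=a_1p_1-p_2$, the assumption $p_3=\lambda p_1$ is equivalent to $p_2=\mu\,p_1$ with $\mu=a_1-\lambda$; that is, the coefficient vectors of $p_1$ and $p_2$ are proportional. The next step is to write out the four coefficient identities coming from $p_2=\mu\,p_1$. The coefficients of $x^3$ and of $x^0$ give immediately $a_2=\mu a_6$ and $a_3=-\mu^2 a_6$; substituting these into the coefficients of $x^2$ and of $x^1$ and using $\mu^2+1>0$ (valid since $\mu\in\mathbb{R}$), one solves for $a_4=(3\mu^2-1)a_6$ and $a_5=-4\mu a_6$. Plugging these values back into $p_1$ collapses it to the factorization
\[
p_1(x)=-a_6\,(x-\mu)\,(x^2+1).
\]
Since $a_6\neq 0$, this polynomial has exactly one real root, contradicting the hypothesis that $p_1$ has three real roots (symmetrically, $p_3=\lambda p_1$ would then also have a single real root). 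Hence $p_1$ and $p_3$ are not proportional, i.e.\ their roots do not coincide with multiplicity.

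I do not expect any real obstacle: the only work is elementary linear algebra on four coefficients. The two points that deserve a word of care are (i) the equivalence ``same multiset of roots $\Leftrightarrow$ proportional'', which is used precisely because both polynomials genuinely have degree three, and (ii) the fact that $\mu$ is real, so the divisions by $\mu^2+1$ are legitimate. If one prefers an argument in the style of Section~\ref{sec:3}, the same conclusion follows by computing a primary decomposition of the ideal generated by the $2\times 2$ minors of the $2\times 4$ coefficient matrix of $(p_1,p_3)$, together with the open conditions $a_6\neq 0$ and $a_2-a_1a_6\neq 0$, and checking that every minimal associated prime contains $4a_2+a_5$ and $3a_3+a_4+a_6$, so that $x^2+1$ divides $p_1$ on the whole variety; the explicit factorization above is, however, shorter and self-contained.
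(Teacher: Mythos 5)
Your proof is correct and follows essentially the same route as the paper: coincidence of the three roots with multiplicity is translated into proportionality of $p_1$ and $p_3$ (equivalently, $p_2=\mu p_1$), and the resulting coefficient identities are shown to be incompatible with $p_1$ having three real roots. The only cosmetic difference is the final step: you obtain the explicit factorization $p_1(x)=-a_6(x-\mu)(x^2+1)$, whereas the paper solves for $a_3,a_4,a_5$ and computes $D_1=-4(a_2^2+a_6^2)^2<0$; both yield the same contradiction.
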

\begin{proof}
The polynomials $p_1,p_3$ have three real roots if 
and only if $a_6, a_2-a_1a_6\neq 0$ and their discriminants are positive.

The three real roots of $p_1,p_3$ coincide (with multiplicity) if and only if there exists $\lambda\in\mathbb{R}$ such that
$p_1(x)=\lambda p_3(x)$. Equating the coefficients of the leading term, one obtains
\[
\lambda=\frac{a_6}{-a_2 + a_1 a_6}.
\]
Replacing $\lambda$ in the rest of the equations yields the system (we have multiplied by $a_2-a_1a_6\neq 0$)
\[
a_2^2 + a_3 a_6=a_2(3 a_3 + a_4 - 3 a_6) - a_5 a_6=a_2(3 a_2 + a_5) + a_6(2 a_3 + a_4 + a6)=0.
\]
Solving this, one obtains (note that it is a staggered solution)
\[
a_3=\frac{-a_2^2}{a_6},\quad
a_5= \frac{a_2(a_4 a_6 + 3 a_2 - 3 a_2^2)}{a_6^2},\quad  
a_4=\frac{3 a_2^2 - a_6^2}{a_6}.
\]
Substituting in $D_1$ gives $D_1=-4(a_2^2+a_6^2)^2< 0$, in contradiction with $p_1$ having three real roots.
\end{proof}

Recall that $\res(p_1,p_3)$ factorizes as 
the product of two polynomials, $R_1,R_2$. We shall prove that if $p_1,p_3$ have a 
real root in common then $R_2$ must vanish.

\begin{lema}\label{lema:R1}
Assume $a_2-a_1a_6,a_6\neq 0$.
$p_1,p_3$ have a real root in common if and only if $a=(a_1,\ldots a_6)\in \Var(R_2)$.
\end{lema}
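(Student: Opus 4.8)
The plan is to show both implications via the resultant. Recall that $\res(p_1,p_3)=R_1R_2$ with $R_1=(4a_2+a_5)^2+(3a_3+a_4+a_6)^2$, so $R_1\geq 0$ always and $R_1=0$ forces $4a_2+a_5=3a_3+a_4+a_6=0$; I will need to deal with this degenerate locus separately. First I would recall the standard fact that, over $\mathbb{R}$, if $p_1$ and $p_3$ are both of degree three and share a complex root, then the resultant $\res(p_1,p_3)$ vanishes; conversely, if $\res(p_1,p_3)=0$ then $p_1$ and $p_3$ have a common complex root, which need not be real. So $\res(p_1,p_3)=0$ is equivalent to ``$p_1,p_3$ have a common complex root'', and the content of the lemma is to upgrade ``complex'' to ``real'' under the hypothesis $a_2-a_1a_6,a_6\neq 0$, and to discard the spurious branch $R_1=0$.

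For the forward implication, if $p_1,p_3$ have a common real root then $\res(p_1,p_3)=R_1R_2=0$. If $R_2\neq 0$ then $R_1=0$, i.e.\ $4a_2+a_5=0$ and $3a_3+a_4+a_6=0$; I would substitute these two relations (using $a_5=-4a_2$, $a_4=-3a_3-a_6$) into $p_1$ and $p_3$ and check by a direct computation — handed off to SINGULAR as in the other propositions — that the common root of $p_1,p_3$ forced by $R_1=0$ is in fact non-real, or that imposing additionally the vanishing of $p_1$ and $p_3$ at a common real point $x$ together with $a_2-a_1a_6\neq 0$, $a_6\neq 0$ leads to a contradiction. Concretely, I would form the ideal generated by $4a_2+a_5$, $3a_3+a_4+a_6$, $p_1(x)$, $p_3(x)$ in $\mathbb{Q}[a_1,\dots,a_6,x]$, saturate by $a_6(a_2-a_1a_6)$, and verify that the resulting variety has no real points (e.g.\ it contains a sum of squares such as $1+a_1^2$ or $a_2^2+a_6^2$, exactly as happens in Propositions~\ref{prop:g33r3} and in Case~2 of Proposition~\ref{prop:Ag1}). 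This shows that a common \emph{real} root forces $R_2=0$, i.e.\ $a\in\Var(R_2)$.

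For the converse, suppose $a\in\Var(R_2)$, so $\res(p_1,p_3)=0$ and hence $p_1,p_3$ share some complex root $z$. Since $p_1,p_3$ have real coefficients, $\bar z$ is also a common root; if $z\notin\mathbb{R}$ then $z,\bar z$ are two distinct common roots, so $\gcd(p_1,p_3)$ has degree at least two, whence $(x-z)(x-\bar z)$, a real quadratic, divides both $p_1$ and $p_3$. Writing $p_1=a_6\,u(x)q(x)$ and $p_3=(a_2-a_1a_6)\,v(x)q(x)$ for a common monic real quadratic factor $q$ and monic real linear factors $u,v$, I would feed the coefficient identities (with $a_6\neq 0$, $a_2-a_1a_6\neq 0$) to SINGULAR, eliminate the auxiliary coefficients of $u,v,q$, and check that the resulting variety forces a relation incompatible with being in $\Var(R_2)$ but outside the real-common-root locus — more precisely, that on $\Var(R_2)\cap\{a_6(a_2-a_1a_6)\neq 0\}$ the common root is automatically real. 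The cleanest route: parametrise the putative non-real common factor and show the locus where $p_1,p_3$ share a \emph{non-real} factor meets $\{a_6(a_2-a_1a_6)\neq 0\}$ in a set contained in $\Var(R_1)$, which was already handled; hence on $\Var(R_2)$ minus lower-dimensional junk the shared root is real.

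The main obstacle I anticipate is precisely separating the two factors $R_1$ and $R_2$ of the resultant: $\res(p_1,p_3)=0$ only gives a common complex root, and one must rule out that on $\Var(R_2)$ (with $a_6(a_2-a_1a_6)\neq 0$) the common root is non-real, as well as rule out that the $R_1=0$ branch contributes a genuine real common root. Both are finite polynomial-elimination checks of the same flavour as those already carried out in Propositions~\ref{prop:g33r3}, \ref{prop:Ag1}, and \ref{prop:Ag2} — form the appropriate ideal, saturate by the nonvanishing conditions, compute minimal associated primes, and observe that the offending components carry a visible sum-of-squares certificate of real-emptiness — so the argument reduces to the SINGULAR computations collected in Appendix~\ref{ap:A}.
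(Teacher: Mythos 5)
Your high-level plan (forward implication from $\res(p_1,p_3)=R_1R_2$ with $R_1$ a sum of squares; converse via a shared quadratic factor) is sensible, but both of the concrete computational claims you rely on are false, so the proof as proposed would not verify. In the forward direction you claim that the ideal $\langle 4a_2+a_5,\,3a_3+a_4+a_6,\,p_1(x),\,p_3(x)\rangle$, saturated only by $a_6(a_2-a_1a_6)$, has no real points (equivalently, that $R_1=0$ forces the common root to be non-real). It does have real points: at $a_1=0$, $a_2=2$, $a_3=1$, $a_4=1$, $a_5=-8$, $a_6=-4$ one has $R_1=0$, $a_6(a_2-a_1a_6)\neq 0$, and $p_1=2(1+2x)(1+x^2)$, $p_3=(1+2x)(1+x^2)$ share the real root $x=-1/2$. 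Vanishing of $R_1$ only forces the common factor $1+x^2$; an additional common real root is possible, and at such points $R_2$ vanishes as well. So your check must also saturate by $R_2$ (you are arguing by contradiction under $R_2\neq 0$, but your ideal does not encode that), or, as the paper does, one substitutes $a_5=-4a_2$, $a_6=-3a_3-a_4$, factors $p_1,p_3$ as a linear polynomial times $1+x^2$, and sees directly that a common real root on this branch is equivalent to the vanishing of $R_2$ there. With that repair your forward direction matches the paper's, which needs no elimination at all.

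The converse is where the real trouble lies. Your key claim --- that the locus where $p_1,p_3$ share a non-real quadratic factor, intersected with $\{a_6(a_2-a_1a_6)\neq0\}$, is contained in $\Var(R_1)$ --- is false. The elimination you propose is essentially the one performed in the proof of the proposition containing condition \eqref{eq:7}, and besides a component inside $\Var(R_1)$ it produces the two components \eqref{eq:singular_points} (the singular locus of $\Var(R_2)$, Lemma~\ref{lema:sing}), which are not contained in $\Var(R_1)$. On the component $a_2=2a_3+a_4=0$ the implication you want actually fails: at $a_1=1$, $a_2=0$, $a_3=-1$, $a_4=2$, $a_5=1$, $a_6=1$ one has $a_6(a_2-a_1a_6)\neq0$, $R_1=1$ and $\res(p_1,p_3)=0$, hence $R_2=0$, yet $p_1=-x(x^2+x+1)$ and $p_3=-(x+1)(x^2+x+1)$ share only the non-real roots of $x^2+x+1$ and have no common real root. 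Consequently no elimination argument can establish the ``if'' direction on all of $\Var(R_2)\cap\{a_6(a_2-a_1a_6)\neq0\}$; that direction is only valid after excluding this degenerate locus, which is exactly the role of the hypotheses $a_3\neq a_6$ and $a_2^2+(2a_3+a_4)^2\neq0$ imposed in Lemma~\ref{lema:sing} and in \eqref{eq:7}. (Note that the paper's own proof of Lemma~\ref{lema:R1} only analyses the branch $R_1=0$ and leaves the case $R_2=0$, $R_1\neq0$ to the later elimination argument, so your instinct that something must be checked there is right --- but the check you propose would come out the other way.)
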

\begin{proof}
If $p_1,p_3$ have a real root in common, then $\res(p_1,p_3)=R_1 R_2=0$. Hence $R_1=0$ or $R_2=0$.	
Assume that $R_1=(4a_2+a_5)^2+(3a_3+a_4+a_6)^2=0$, i.e., $a_5=-4a_2$ and $a_6=-3a_3-a_4$. Then
\begin{equation*}\begin{split}
p_1(x)&=\left(a_2 + (3 a_3 + a_4) x\right) (1 + x^2),\\
p_3(x)&=\left(a_1 a_2 + a_3 + (a_2 + 3 a_1 a_3 + a_1 a_4) x\right) (1 + x^2).
\end{split}
\end{equation*}
Therefore, $p_1,p_3$ have a real root in common if and only if $a_2^2 - 3 a_3^2 - a_3 a_4=0$. 
Since $R_1=(4a_2+a_5)^2+(3a_3+a_4+a_6)^2=0$ then
\[
R_2=(a_2^2 - 3 a_3^2 - a_3 a_4) \left(4 a_2^2 + (2 a_3 + a_4)^2\right).
\]
Thus, the real root coincide if and only if $R_2=0$. 
\end{proof}

Next, we study the singular points of the variety defined by $R_2$. We shall show that they are the intersection
of the variety with the hyperplane $a_3=a_6$. Moreover, in the intersection, the odd-multiplicity real 
roots of $p_1$ and $p_3$ do not coincide.

\begin{lema}\label{lema:sing}
The point $a=(a_1,a_2,\ldots,a_6)\in \Var(R_2)$ is singular if and only if $a_3=a_6$ or $a_2=2a_3+a_4=0$.

Moreover, if $a\in \Var(R_2)$ is singular, then the real odd-multiplicity roots of $p_1,p_3$ do not coincide.
\end{lema}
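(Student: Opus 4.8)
The plan is to prove the two assertions of the lemma separately: the identification of the singular locus of the hypersurface $\Var(R_2)\subseteq\mathbb{C}^6$, and the claim that at every point of that locus the odd-multiplicity real roots of $p_1$ and $p_3$ fail to coincide.

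For the first assertion I would use the Jacobian criterion, Theorem~\ref{theo:dimprimes}. Since $R_2$ is a non-constant polynomial not involving $a_1$, every irreducible component of $\Var(R_2)$ has dimension $5$, so (passing to the squarefree part of $R_2$ if needed, and noting $\partial R_2/\partial a_1\equiv 0$) a point $a\in\Var(R_2)$ is singular exactly when $\partial R_2/\partial a_2=\dots=\partial R_2/\partial a_6=0$ at $a$; hence the singular locus equals $\Var(J)$ with $J=\big\langle R_2,\ \partial R_2/\partial a_2,\dots,\partial R_2/\partial a_6\big\rangle$. I would then compute the minimal associated primes of $J$ with \texttt{minAssGTZ} (see Appendix~\ref{ap:A}), discard those whose varieties contain no real point, and check that the survivors are accounted for by the conditions $a_3=a_6$ and $a_2=2a_3+a_4=0$. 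The reverse inclusion is a direct substitution: setting $a_2=0,\ a_4=-2a_3$ makes $R_2$ and all its first partials vanish, and the remaining singular stratum, contained in the hyperplane $a_6=a_3$, is read off after substituting $a_6=a_3$.

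For the second assertion I would use the reduction at the start of Section~\ref{sec:main}: the odd-multiplicity real roots of $p_1$ and $p_3$ coincide if and only if $p_1p_3$ has definite sign, i.e. no real root of odd multiplicity. On $\{a_2=2a_3+a_4=0\}$ one computes $p_2(x)=-a_3+a_5x+a_6x^2$ and $p_1(x)=-x\,p_2(x)$, whence $p_3=a_1p_1-p_2=-(1+a_1x)\,p_2$ and $p_1p_3=x(1+a_1x)\,p_2(x)^2$; since $p_2^2$ contributes only even multiplicities and $x(1+a_1x)$ has the simple real root $x=0$, $p_1p_3$ changes sign there. On the part of the singular locus lying in $\{a_3=a_6\}$ the same mechanism applies after substituting $a_6=a_3$: that stratum either satisfies $a_2=2a_3+a_4=0$ (already treated) or collapses to $a_3=a_6=0$, where an explicit factorization of $p_1p_3$ again exhibits a sign change --- for instance, when $2a_2+a_5=0$ one gets $p_2=x\,p_1$, hence $p_3=(a_1-x)p_1$ and $p_1p_3=(a_1-x)\,p_1(x)^2$, which changes sign at $x=a_1$; the fully degenerate case $a_2=a_3=a_6=0$ is handled by the corresponding linear/quadratic factorizations, using that $p_1\not\equiv 0$. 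In every case $p_1p_3$ has a real root of odd multiplicity, so the odd-multiplicity real roots of $p_1$ and $p_3$ do not coincide.

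The main obstacle is the first assertion: the primary decomposition of $J$ is the computationally heavy ingredient (precisely the kind of step delegated to SINGULAR in the Appendix), and one must be careful to separate the genuinely singular real strata from spurious components of $\Var(J)$ without real points. In the second assertion the only delicacy is bookkeeping the degenerate sub-cases in which $p_1$, $p_2$, or the quadratic factor of $p_2$ drop in degree; in each of them the same factorization argument closes the case.
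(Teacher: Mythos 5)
Your first assertion and your treatment of the component $a_2=2a_3+a_4=0$ essentially reproduce the paper's argument: the paper also computes the minimal associated primes of $\langle R_2,\nabla R_2\rangle$ in SINGULAR, obtaining $\langle 2a_2^2+a_2a_5+a_4a_6+2a_6^2,\ a_3-a_6\rangle$ and $\langle a_2,\ 2a_3+a_4\rangle$ (the step you leave implicit is the identity $R_2\vert_{a_3=a_6}=(2a_2^2+a_2a_5+a_4a_6+2a_6^2)^2$, which is what allows the first component to be described on $\Var(R_2)$ by the single condition $a_3=a_6$), and on $a_2=2a_3+a_4=0$ it uses exactly your factorization, phrased as the parity of $x=0$ in $p_1=x(a_3-a_5x-a_6x^2)$ versus $p_3=(1+a_1x)(a_3-a_5x-a_6x^2)$.

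There is, however, a genuine gap in your second assertion on the stratum $a_3=a_6$. That stratum is $\{a_3=a_6,\ 2a_2^2+a_2a_5+a_4a_6+2a_6^2=0\}$, and your claimed dichotomy (``either $a_2=2a_3+a_4=0$ or it collapses to $a_3=a_6=0$'') is false: it holds only on the sub-locus $a_2=0$. For example $a_2=a_3=a_6=1$, $a_4=-4$, $a_5=0$ (any $a_1$) lies on the stratum with $a_2\neq 0$, $2a_3+a_4\neq 0$, $a_3=a_6\neq 0$ and $2a_2+a_5\neq 0$, so none of your sub-cases applies, and no sign-change factorization of the type you use is available there. These generic points are precisely where the paper's proof does real work: solving $2a_2^2+a_2a_5+a_4a_6+2a_6^2=0$ for $a_5$ (legitimate since $a_2\neq0$) gives $p_1(x)=(a_3x+a_2)\,q(x)/a_2$ and $p_3(x)=q(x)\,\bigl(a_3-a_2x+a_1(a_2+a_3x)\bigr)/a_2$ with $q(x)=a_2+(2a_3+a_4)x-a_2x^2$, whose discriminant $(2a_3+a_4)^2+4a_2^2>0$; hence both cubics have three real roots, and Proposition~\ref{prop:g33r3} is invoked to conclude that their roots, and in particular their odd-multiplicity real roots, cannot coincide. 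Without this (or an equivalent) argument your proposal does not cover the bulk of the singular locus, so the second half of the lemma is not proved as written.
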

\begin{proof}
The variety of singular points of $\Var(R_2)$ is defined by 
$\Var(\langle R_2,\nabla R_2\rangle)$. It has two minimal
associated prime ideals (see the SINGULAR code in Appendix~\ref{ap:A}), 
\begin{equation}\label{eq:singular_points}
\langle 2a_2^2+a_2a_5+a_4a_6+2a_6^2,a_3-a_6\rangle\quad\text{and}\quad \langle 2a_3+a_4, a_2\rangle.
\end{equation}
If $a_3=a_6$, then 
$
R_2=(2 a_2^2 + a_2 a_5 + a_4 a_6 + 2 a_6^2)^2
$. 
Hence, 
\begin{equation*}
R_2=0, a_3=a_6\quad \text{ if and only if }\quad R_2=0, \nabla R_2=0.	
\end{equation*}
Let $a\in \Var(\langle R_2,a_3-a_6\rangle)$. Then, 
parametrizing the variety by $a_1,a_2,a_3,a_4$, we obtain
\[
p_1(x)=\frac{(a_3 x + a_2) (a_2 + (2 a_3 + a_4) x - a_2 x^2)}{a_2},
\]
\[
p_3(x)=\frac{(a_2 + (2 a_3 + a_4) x - a_2 x^2) (a_3 - a_2 x + a_1 (a_2 + a_3 x))}{a_2}.
\]
I.e., $p_1,p_3$ have three real roots (as the quadratic factor has positive discriminant),
and by Proposition~\ref{prop:g33r3} they do not coincide.

\medskip

Finally, let $a\in \Var(\langle a_2,2a_3+a_4\rangle)$. Then
\[
p_1(x)=x (a_3 - a_5 x - a_6 x^2),\quad p_3(x)=(1 + a_1 x) (a_3 - a_5 x - a_6 x^2).
\]
Since $x=0$ has different parity as root of $p_1$ than it does as root of $p_3$, 
they do not have the same odd-multiplicity  real roots.
\end{proof}

The next proposition considers the case of $p_1$ and $p_3$ having a unique simple real solution.

\begin{prop}
Assume $p_1$ and $p_3$ have one simple root and two complex conjugate roots.
Then $p_1$ and $p_3$ have the same odd-multiplicity real root if and only if
\begin{equation}\label{eq:7}
\begin{split}
&R_2=0,\ D_1<0,\ D_3<0,\ \\
&a_6\neq 0,\ a_2-a_1a_6\neq 0,\ a_3\neq a_6,\ a_2^2+(a_4+2a_3)^2\neq 0. 
\end{split}
\end{equation}
\end{prop}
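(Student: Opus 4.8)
I would begin from the remark, already made at the start of this subsection, that the hypothesis ``$p_1$ and $p_3$ are cubics each with one simple real root and a pair of complex conjugate roots'' means exactly
\[
a_6\neq 0,\qquad a_2-a_1a_6\neq 0,\qquad D_1<0,\qquad D_3<0 ,
\]
and that under it the single real root of each of $p_1,p_3$ is simple, hence of odd multiplicity. So ``$p_1$ and $p_3$ have the same odd-multiplicity real root'' is the same as ``$p_1$ and $p_3$ have a common real root'', and, since the four displayed conditions are precisely the first four conditions in \eqref{eq:7}, the proof reduces to showing that, under them, a common real root exists if and only if $R_2=0$, $a_3\neq a_6$ and $a_2^2+(a_4+2a_3)^2\neq 0$.

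For the forward implication, assume $p_1$ and $p_3$ share a real root. Then $\res(p_1,p_3)=R_1R_2=0$. If $R_2\neq 0$ then $R_1=0$, and the factorizations computed in the proof of Lemma~\ref{lema:R1} give $p_1=(a_2+(3a_3+a_4)x)(1+x^2)$ and $p_3=(a_1a_2+a_3+(a_2+a_1(3a_3+a_4))x)(1+x^2)$; as $1+x^2$ has no real zero and $p_1,p_3$ are cubics, the shared real root is the common root of the two linear factors, forcing $a_2^2-3a_3^2-a_3a_4=0$ and hence $R_2=(a_2^2-3a_3^2-a_3a_4)(4a_2^2+(2a_3+a_4)^2)=0$, a contradiction. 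Thus $R_2=0$, i.e. $a=(a_1,\dots,a_6)\in\Var(R_2)$. If in addition $a_3=a_6$ or $a_2=2a_3+a_4=0$, then $a$ lies on one of the two singular components of $\Var(R_2)$ exhibited in Lemma~\ref{lema:sing} (when $a_3=a_6$ use that $R_2=(2a_2^2+a_2a_5+a_4a_6+2a_6^2)^2$ forces the other generator to vanish), so $a$ is a singular point and, by that lemma, the odd-multiplicity real roots of $p_1,p_3$ do not coincide --- against our assumption. Hence $a_3\neq a_6$ and $a_2^2+(a_4+2a_3)^2\neq 0$, which is \eqref{eq:7}.

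For the converse, assume \eqref{eq:7}. The degree conditions together with $D_1,D_3<0$ make $p_1,p_3$ cubics with a unique simple real root each, and $R_2=0$ forces $\res(p_1,p_3)=0$, so $p_1$ and $p_3$ have a common complex root $\alpha$. If $\alpha\in\mathbb{R}$ it is the real root of both, and we are done. Otherwise $\bar\alpha$ is a common root too, so $q:=(x-\alpha)(x-\bar\alpha)$ is a monic real quadratic of negative discriminant dividing $p_1$, $p_3$, and hence also $p_2=a_1p_1-p_3$. Writing $q=x^2+\beta x+\gamma$ and imposing vanishing of the remainders of $p_1$ and of $p_2$ under division by $q$ yields four polynomials in $\beta,\gamma,a_1,\dots,a_6$; I would adjoin these to $R_2$, saturate by $a_6$ and by $a_2-a_1a_6$, eliminate $\beta$ and $\gamma$, and compute the minimal associated primes of the resulting ideal with the SINGULAR tools used in the proofs of Lemmas~\ref{lema:R1} and~\ref{lema:sing}. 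I expect each component either to contain a factor of the form $1+a_1^2$ (hence no real points), or to force $a_3=a_6$, or to force $a_2=2a_3+a_4=0$ --- in every case contradicting \eqref{eq:7}. This rules out the non-real case and completes the proof.

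The real obstacle is this last elimination: proving that on $\Var(R_2)$ --- away from $\{a_3=a_6\}$ and $\{a_2=2a_3+a_4=0\}$, and with $a_6(a_2-a_1a_6)\neq 0$ --- the polynomials $p_1$ and $p_3$ cannot share a quadratic factor; equivalently, that the locus where $\gcd(p_1,p_3)$ has degree at least two meets $\Var(R_2)$ only inside the singular locus described by Lemma~\ref{lema:sing}. Everything else is routine bookkeeping with the discriminants and the factorizations already recorded there.
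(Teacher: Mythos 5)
Your forward direction is fine and matches the paper: common real root $\Rightarrow$ $\res(p_1,p_3)=R_1R_2=0$, the case $R_1=0,R_2\neq 0$ is excluded via the factorization in Lemma~\ref{lema:R1}, and the two singular components of $\Var(R_2)$ from Lemma~\ref{lema:sing} rule out $a_3=a_6$ and $a_2=2a_3+a_4=0$. The converse also starts like the paper's proof (a shared quadratic factor of $p_1$, $p_3$, coefficients adjoined to $R_2$, saturation, elimination, primary decomposition), but the outcome you predict for that computation is wrong, and this is a genuine gap. The decomposition does \emph{not} consist only of components with no real points or components inside the singular locus of $\Var(R_2)$: the paper's computation (Appendix~\ref{ap:A}) produces, besides the two ideals of \eqref{eq:singular_points}, a third component containing $R_1$, i.e.\ $a_5=-4a_2$, $3a_3+a_4+a_6=0$. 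That component has plenty of real points compatible with \eqref{eq:7}; for instance $a_2=a_3=1$, $a_4=-2$, $a_5=-4$, $a_6=-1$, $a_1$ generic gives $p_1=(1+x)(1+x^2)$ and $p_3=(1+a_1)(1+x)(1+x^2)$, so $D_1,D_3<0$, $a_3\neq a_6$, $a_2\neq 0$, and $p_1,p_3$ really do share the non-real factor $1+x^2$. Hence your claim that the elimination ``rules out the non-real case'' cannot be true: the non-real common factor case is not impossible under \eqref{eq:7}; it simply happens that there the real roots coincide as well.

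Consequently your converse needs the extra argument that the paper supplies: on the $R_1=0$ component one uses (as in the proof of Lemma~\ref{lema:R1}) that $R_1=0$ forces $p_1=(a_2+(3a_3+a_4)x)(1+x^2)$ and $p_3=(a_1a_2+a_3+(a_2+a_1(3a_3+a_4))x)(1+x^2)$, and then $R_2=0$ reduces to $a_2^2-3a_3^2-a_3a_4=0$, which is exactly the condition that the two linear factors have the same root; so the real roots coincide anyway and the desired conclusion holds on that component too. Relatedly, your logical framing is slightly off: the goal of the converse is not to exclude a shared non-real factor (which cannot be done), but to show that under \eqref{eq:7} the real roots must coincide, arguing by contradiction from ``the real roots differ, hence the common root forced by $R_2=0$ is the complex pair'' and then disposing of each component of the resulting ideal — the two singular ones by hypothesis, the $R_1$ one by the argument just described. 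With that component handled, your proof closes and is essentially the paper's.
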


\begin{proof} If $p_1$ and $p_3$ have the same real root then $R=0$ and, by Lemma~\ref{lema:R1}, $R_2=0$. Moreover, applying Lemma~\ref{lema:sing}, $a_3\neq a_6$, and either $a_2\neq 0$ or  $a_4+2 a_3\neq 0$.


Conversely, suppose $R_2=0$, $a_3\neq a_6$, and either $a_2\neq0$ or $a_4+2a_3\neq0$. 
We have to prove that the real root of $p_1$ coincides with that of $p_3$.

Assume on the contrary that these real roots do not coincide. In that case, the complex conjugate roots 
of $p_1$ and $p_3$ must coincide. Then there exist some $a,a',b,d\in\mathbb{R}$ such that
\begin{equation}\label{eq:d1d2negativos}\begin{split}
p_1(x)&=-a_6 (x-a) \big((x-b)^2+d^2\big),\\
p_3(x)&=(-a_1a_6+a_2) (x-a') \big((x-b)^2+d^2\big).\\
\end{split}\end{equation}
Equating the coefficients, eliminating the variables $a,a',b,d$, 
and computing the minimal associated prime ideals (see Appendix~\ref{ap:A}), 
we obtain the ideals in \eqref{eq:singular_points} 
(which do not satisfy that $a_3\neq a_6$, and either $a_2\neq0$ or $a_4+2a_3\neq0$), 
and an ideal such that one of its generators is $R_1$. 
By Lemma~\ref{lema:R1}, we conclude. 
\end{proof}

The last case is $p_1,p_3$ of degree three with a unique odd-multiplicity real root, 
and possible double roots. 

\begin{prop}\label{prop:g33r1s}
Assume $p_1,p_3$ have degree three (i.e., $a_6(a_2-a_1a_6)\neq0$)  and one of them has a root of multiplicity two or more. Then 
$p_1$ and $p_3$ have the same odd-multiplicity real root if and only if $R_2=0$ and one of the following statements holds:
\begin{equation}\label{eq:8}
D_1=0,\ D_3<0,\ D_1'\neq 0,\ R_{113}\neq 0,
\end{equation}
\begin{equation}\label{eq:9}
D_1=D_1'=0,\ D_3<0,
\end{equation}
\begin{equation}\label{eq:10}
D_3=0,\ D_1<0,\ D_3'\neq 0,\ R_{133}\neq 0,
\end{equation}
\begin{equation}\label{eq:11}
D_3=D_3'=0,\ D_1<0,
\end{equation}
\begin{equation}\label{eq:12}
D_1=D_1'=D_3=0,\ R_{133}\neq 0,
\end{equation}
\begin{equation}\label{eq:13}
D_1=D_3=D_3'=0,\ R_{113}\neq 0,
\end{equation}
\begin{equation}\label{eq:15}
\begin{split}
 D_1=D_3=0,\ D_1'\neq 0,\ D_3'\neq 0,\ 
\bar R_{113}\neq 0,\ \bar R_{133}\neq 0,
\end{split}
\end{equation}

\end{prop}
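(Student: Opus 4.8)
The plan is to reduce the statement to the real-root configuration of the two cubics $p_1$ and $p_3$, controlling common real roots via Lemma~\ref{lema:R1} and excluding the proportional case via Proposition~\ref{prop:g33r3}. Since $a_6(a_2-a_1a_6)\neq 0$ both polynomials have degree three; by hypothesis one of them, say $p_1$ (the other case being symmetric), has a repeated root, i.e.\ $D_1=0$. A real cubic with a repeated root has all roots real, and its set of odd-multiplicity real roots is a single point $\rho_1$: the simple root if $D_1'\neq 0$, and the triple root (so that $p_1=-a_6(x-\rho_1)^3$) if $D_1'=0$. Hence, if the odd-multiplicity real roots of $p_1$ and $p_3$ coincide, $p_3$ has exactly one such root, which forces $D_3\leq 0$ (if $D_3>0$, $p_3$ has three distinct real roots) and forces $\rho_1$ to be a root of $p_3$; in particular $p_1$ and $p_3$ share a real root, so $R_2=0$ by Lemma~\ref{lema:R1}. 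Thus $R_2=0$ is necessary; conversely, when $R_2=0$ the cubics share a real root, and the content of the proof is to decide when this forces the odd-multiplicity ones to agree.

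The second step is a short dictionary between the auxiliary resultants and root positions. If $D_1=0$ and $D_1'\neq 0$, write $p_1=-a_6(x-\alpha)^2(x-\beta)$ with $\alpha\neq\beta$ real; then $p_1'=-a_6(x-\alpha)(3x-\alpha-2\beta)$, so $R_{113}=\res(p_1',p_3)$ vanishes precisely when $p_3(\alpha)=0$ or $p_3\big((\alpha+2\beta)/3\big)=0$, whereas the remainder $r_1$ of $p_1$ by $p_1'$ equals $c(x-\alpha)$ with $c\neq 0$, so $\bar R_{113}=\res(r_1,p_3)$ vanishes precisely when $p_3(\alpha)=0$. If $D_1=D_1'=0$, then $p_1=-a_6(x-\alpha)^3$ and $R_{113}$ vanishes precisely when $p_3(\alpha)=0$. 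The mirror statements hold after exchanging the roles of $p_1,p_3$ and of $D_1',R_{113},\bar R_{113}$ with $D_3',R_{133},\bar R_{133}$. The only delicate point is the degree drop: $r_1\equiv 0$ exactly when $p_1$ is a perfect cube, and likewise for $r_3$.

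With this dictionary the case analysis proceeds according to the sign of $D_3$ (resp.\ $D_1$). Suppose first $D_3<0$, so $p_3$ has a unique simple real root $\rho_3$, equal to its odd-multiplicity root. If $D_1'=0$, then $R_2=0$ forces the unique real root $\alpha$ of $p_1$ to equal $\rho_3$ with no further condition, which is \eqref{eq:9}; if $D_1'\neq 0$, then $R_2=0$ forces $\rho_3\in\{\alpha,\beta\}$, and $\rho_3=\beta$ (the coincidence) holds exactly when $\rho_3$ is neither $\alpha$ nor the other critical point of $p_1$ — which, as $\alpha\neq\beta$, is precisely the condition $R_{113}\neq 0$, which is \eqref{eq:8}. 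The symmetric situation $D_1<0$, $D_3=0$ yields \eqref{eq:10} and \eqref{eq:11}. There remains $D_1=D_3=0$, where both cubics have only real roots. If $D_1'=D_3'=0$, coincidence of the triple roots would make $p_1$ and $p_3$ proportional with all roots real, contradicting $D_1=0$ via Proposition~\ref{prop:g33r3}; this sub-case is thus empty, which is why it is absent from the list. If exactly one of the two is a perfect cube, say $p_1=-a_6(x-\alpha)^3$ and $p_3=c(x-\gamma)^2(x-\delta)$ with $\gamma\neq\delta$, then $R_2=0$ forces $\alpha\in\{\gamma,\delta\}$, and $\alpha=\delta$ (the coincidence) holds exactly when $\alpha\neq\gamma$, i.e.\ $p_1(\gamma)\neq 0$, i.e.\ $\bar R_{133}\neq 0$ (equivalent, in this configuration, to the stated $R_{133}\neq 0$), which is \eqref{eq:12}; the symmetric sub-case is \eqref{eq:13}. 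Finally, if neither is a perfect cube, write $p_1=-a_6(x-\alpha)^2(x-\beta)$ and $p_3=c(x-\gamma)^2(x-\delta)$ with $\alpha\neq\beta$, $\gamma\neq\delta$; then $R_2=0$ forces $\{\alpha,\beta\}\cap\{\gamma,\delta\}\neq\emptyset$, and the coincidence $\beta=\delta$ holds exactly when the equalities $\alpha=\gamma$, $\alpha=\delta$, $\beta=\gamma$ all fail; the first two fail iff $p_3(\alpha)\neq 0$ (i.e.\ $\bar R_{113}\neq 0$) and, given $\alpha\neq\gamma$, the third fails iff $p_1(\gamma)\neq 0$ (i.e.\ $\bar R_{133}\neq 0$), while $\alpha=\gamma$ would again force $p_1,p_3$ proportional against Proposition~\ref{prop:g33r3}. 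This is \eqref{eq:15}.

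What remains is to verify, case by case, that the non-vanishing conditions listed are both necessary (the coincidence cannot hold while one of them vanishes) and sufficient. Each of these reductions is the same root-position bookkeeping together with Lemma~\ref{lema:R1} and Proposition~\ref{prop:g33r3}, supplemented where convenient by a SINGULAR computation of minimal associated primes to discard configurations consistent with the resultant identities over $\mathbb{C}$ but having no real points or violating $a_6(a_2-a_1a_6)\neq 0$, exactly in the style of the proofs of Proposition~\ref{prop:Ag1} and Lemma~\ref{lema:sing}. I expect the main obstacle to be bookkeeping rather than a single hard idea: keeping the case split exhaustive and the cases mutually exclusive, and making sure the translation between the four resultants $R_{113},R_{133},\bar R_{113},\bar R_{133}$ and the positions of the (possibly repeated) roots is correct in every degenerate configuration, including the degree drops of the division remainders $r_1$ and $r_3$.
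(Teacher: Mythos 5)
Your proposal is correct and follows essentially the same route as the paper: necessity of $R_2=0$ via Lemma~\ref{lema:R1}, exclusion of proportional cubics via Proposition~\ref{prop:g33r3}, and a case analysis on $D_1,D_3,D_1',D_3'$ in which $R_{113},R_{133},\bar R_{113},\bar R_{133}$ are translated into statements about which (simple, double or triple) roots are shared. Your explicit dictionary between these resultants and root positions is just a more detailed version of what the paper's proof uses implicitly, and the concluding suggestion of extra SINGULAR computations is not needed for this proposition.
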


\begin{proof}
Since $p_1,p_3$ have degree three, then $a_6\neq 0$, $a_2-a_1a_6\neq 0$.
By Lemma \ref{lema:R1}, $p_1,p_3$ have a real root in common if and only if $R_2=0$.
In the following, we shall assume this to be the case. 

Assume that $p_1$ has a root $x_1$ of multiplicity two or more, and that $p_3$ has a simple 
real root, $x_3$, and two complex conjugate roots, i.e.,  $D_1 = 0$, $D_3<0$.  
The multiplicity of $x_1$ is two if and only if $D_1'\neq 0$, and is three if and only if $D_1'=0$. 
In the former case of $D_1'\neq 0$, $p_1$ 
has a simple root $\bar{x}_1\neq x_1$. Therefore $p_1,p_3$ have the same odd-multiplicity real roots 
if and only if $x_3=\bar{x}_1$. As $R_2=0$, then either $x_3=\bar{x}_1$ or $x_1=\bar{x}_1$. Moreover,
$x_1$ is a root of $p_1'$, while $\bar{x}_1$ is not, so that $x_3=\bar{x}_1$ if and only if $R_{113}\neq 0$.
In the latter case of $D_1'=0$, $x_1$ is the unique real root of $p_1$ with multiplicity three,
and, as $R_2=0$, $x_1=x_3$, so that the odd-multiplicity real roots of $p_1,p_3$ coincide. 

\medskip

Assume that $p_3$ has a root of multiplicity two or more, and $p_1$ has a simple 
real root and two complex conjugate roots. Arguing analogously, we obtain that
the odd-multiplicity real roots of $p_1,p_3$ coincide if and only if
\eqref{eq:10} or \eqref{eq:11} hold.

\medskip

Assume that $p_1$ and $p_3$ have a root of multiplicity two or more, i.e., $D_1=D_3=0$. Firstly, by Proposition~\ref{prop:g33r3}, if both
$p_1$ and $p_3$ have a root of multiplicity three, then it can not be 
common.

If $D_1'=0$, then $p_1$ has a triple root. As $R_2=0$,
this root coincides with one of the roots of $p_3$. 
If $R_{133}\neq 0$, then it coincides with a simple root of $p_3$, and in any other
case ($R_{133}=0$ and $D_3'\neq 0$), it coincides with the double root of $p_3$. 

Analogously, if $D_3'=0$, then the triple root of $p_3$ coincides with the odd-multiplicity 
real root of $p_1$ if and only if $R_{113}\neq 0$.

If  $D_1',D_3'\neq 0$, then $p_1$ and $p_3$ have a root of multiplicity two and a simple root.
In this case, the greatest common divisor of $p_1$ and $p'_1$ is $r_1$, a degree-one polynomial, so that $\bar R_{113}$ is zero if and only if the double root of $p_1$
is a root of $p_3$.  Analogously, $\bar R_{133}$ is zero if and only if 
the double root of $p_3$ is a root of $p_1$.
By Proposition~\ref{prop:g33r3}, if $p_1,p_3$ have a double root
in common, then their simple root is distinct. So $p_1,p_3$ have the same
simple root if and only if $\bar R_{113}\neq 0$ and $\bar R_{133}\neq 0$.
\end{proof}		

Finally, we compute examples of points for some of the semi-varieties and 
their dimensions.

\begin{prop}\label{prop:dims}
The codimensions of the semi-varieties defined by conditions of Theorem~\ref{teo:A} are
the following:
\begin{itemize}
 \item $5a)$ has codimension one.
 \item $5b),5d)$ have codimension two.
 \item $2),3a),4),5f)$ have codimension three. 
 \item $1a),1b)$ have codimension four. 
 \item $3b)$ has codimension five.
 \item $5f)$ has codimension two or three.
 \item $5c),5e),5g),5h)$ have codimension of at least two.
\end{itemize}

\end{prop}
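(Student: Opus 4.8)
\emph{Setup of the method.} We work in the affine space $\mathbb{A}^6$ with coordinates $a_1,\dots,a_6$, so that the codimension of a semivariety is $6$ minus its dimension. Each of the fourteen conditions of Theorem~\ref{teo:A} is a list of polynomial equalities together with a list of sign conditions, some strict (``$\ne 0$'' or ``$<$'') and some non-strict (``$\le$''). The plan is, for each condition, to let $J\subset\mathbb{Q}[a_1,\dots,a_6]$ be the ideal generated by its equality constraints and to compute the minimal associated primes $P_1,\dots,P_k$ of $J$ together with their dimensions, via \texttt{minAssGTZ} and \texttt{dim(std(I))}. The dimension of the semivariety is then the largest $\dim P_i$ among the primes that (a) do not contain the product $h$ of the polynomials occurring in the ``$\ne 0$'' constraints --- checked by the saturation/radical-membership test of Remark~\ref{rema:membership} --- and (b) carry a real point at which all sign conditions hold \emph{strictly}. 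Point (b) is what prevents the sign conditions from dropping the dimension: if $\mathbf{a}$ is a real point of an irreducible component $C$ at which, say, $D_1<0$ (hence $D_1\le 0$) and the remaining inequalities hold strictly, then the semialgebraic region has nonempty interior in $C$ near $\mathbf{a}$, and its Zariski closure is $C$. Thus in each case it suffices to compute $\dim\Var(J)$ componentwise and to exhibit one explicit witness point.

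\emph{The clean cases.} First I would dispatch $1a)$, $1b)$, $3b)$, which are essentially linear. In $1a)$ the four equations $a_6=a_5=3a_3+a_4=a_2=0$ are independent and leave $a_1,a_3$ free, so the codimension is $4$. In $1b)$, eliminating $a_2,a_3,a_4,a_5$ expresses them as fixed polynomials in $a_1,a_6$, so the variety is $2$-dimensional (and, in fact, irreducible), giving codimension $4$. In $3b)$ the five equations fix $a_2,a_3,a_4,a_5$ as multiples of $a_6$ and force $a_1$ into a degree-two extension of $\mathbb{Q}$, so the variety is a curve, the codimension is $5$, and one checks directly that the relations $a_2-a_1a_6=0$, $2a_3+a_4+a_1(3a_2+a_5)+a_6=0$ hold automatically and that $a_6\ne 0$ occurs. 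For $2)$, $3a)$, $4)$ the equality ideal is generated by the two or three listed linear polynomials together with $R_2$ (in case $2)$ one first notes that on $\{a_6=3a_2+a_5=0\}$ the polynomial $R_2$ has a factor $a_2^2$, and the factor $a_2=0$ is removed by the hypothesis $a_2\ne 0$); in each case the computation yields a component of codimension $3$ meeting the open locus of the inequalities, witnessed by an explicit point, so the codimension is $3$. For $5a)$ the only equality is $R_2=0$, a hypersurface, so the codimension is $1$; for $5b)$ and $5d)$ the equalities are $R_2=0$ together with one discriminant equation ($D_1=0$, resp.\ $D_3=0$), and since $D_1$ (resp.\ $D_3$) does not vanish identically on the top component of $\Var(R_2)$, the computation shows the top-dimensional component has codimension $2$.

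\emph{The delicate cases and the main obstacle.} The subtle conditions are $5c)$, $5e)$, $5f)$, $5g)$, $5h)$. In each of them the equality ideal $J$ contains $R_2$ together with two or more of $D_1,D_1',D_3,D_3'$, and it is no longer prime: its primary decomposition splits off several components --- including, as already seen in the proofs of Proposition~\ref{prop:Ag1} and of Lemmas~\ref{lema:R1}, \ref{lema:sing}, components on which $1+a_1^2$ or $a_6$ vanishes, and, for a pair $D_1,D_1'$ (resp.\ $D_3,D_3'$), the codimension-two ``triple-root'' locus of $p_1$ (resp.\ $p_3$) as opposed to the expected codimension-three ``double-root'' locus. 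After discarding, by the saturation test, the primes containing the relevant product $h$ (one of $D_1'D_3'\bar R_{113}\bar R_{133}$, $\bar R_{113}$, $\bar R_{133}$, $R_{113}$, $R_{133}$, $D_1'$, $D_3'$, according to the case) together with the obviously non-real ones, one is left with a short list of primes of possibly different dimensions. To pin the codimension down exactly one would still have to decide which of these primes have real points in the region $\{D_3<0\}$ (resp.\ $\{D_1<0\}$) together with the remaining strict conditions, and this real-solvability question is precisely what the complex Gr\"obner-basis computations cannot settle. Accordingly, for $5f)$ we can assert only that the codimension is $2$ or $3$ (one component of each dimension survives the purely polynomial constraints, and which one carries the real stratum is left open), while for $5c)$, $5e)$, $5g)$, $5h)$ we obtain the bound ``codimension at least $2$'' from the surviving primes together with an explicit witness point. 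I expect this real-versus-complex gap to be the main conceptual obstacle; the secondary one is purely computational, since $D_1,D_3,D_1',D_3',R_{113},R_{133},\bar R_{113},\bar R_{133}$ are very large polynomials, so the Gr\"obner-basis and primary-decomposition computations are heavy --- all of them are run in SINGULAR, with the code collected in Appendix~\ref{ap:A}.
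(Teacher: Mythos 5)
There is a genuine gap at the heart of your method: the inference from ``the component $C$ carries a real point $\mathbf a$ at which the sign conditions hold strictly'' to ``the semivariety has dimension $\dim C$ (its Zariski closure is $C$)''. A real point of an irreducible \emph{complex} component certifies nothing about real dimension: the real locus of $C$ can have strictly smaller dimension than $\dim_{\mathbb C}C$ (think of $\Var(x^2+y^2)\subset\mathbb{A}^2$, whose real points form a single point). This failure actually occurs in the present setting: by Lemma~\ref{lema:sing}, on the hyperplane $a_3=a_6$ the polynomial $R_2$ is a perfect square, so near real points of $\Var(R_2)$ with $a_3=a_6$ the real zero set of $R_2$ is the codimension-two set $\{a_3-a_6=2a_2^2+a_2a_5+a_4a_6+2a_6^2=0\}$, not a hypersurface. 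Hence in case $5a)$ (and likewise in $2),3a),4),5b),5d)$, where you argue ``witnessed by an explicit point, so the codimension is $3$'', etc.) a randomly chosen real witness does not pin down the codimension; you need the witness to be a \emph{regular} point. This is exactly the ingredient the paper adds: for each case it exhibits a point where the inequalities hold strictly, computes the rank $c_p$ of the Jacobian of the equality constraints there, and computes the Krull codimension $c_I=\codim\mathcal V_{\mathbb C}(I)$; by Theorem~\ref{theo:dimprimes} one gets $c_I\le c\le c_p$ for the real codimension $c$, and in all cases except $5c),5e),5f),5g),5h)$ the two bounds coincide (for $5f)$ one gets $2\le c\le 3$, and for $5c),5e),5g),5h)$ only $c\ge c_I=2$, since no regular real point is certified). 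Your treatment of the genuinely linear cases $1a),1b),3b)$ by explicit parametrization is fine, and your real-versus-complex caveat for the delicate cases is the right instinct, but without the Jacobian-rank (regularity) check the lower bounds on dimension in all the remaining cases are unjustified.

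A secondary, practical point: the paper never performs primary decomposition or saturation of the large ideals involving $D_1,D_3,D_1',D_3',R_{113},\dots$; it only needs \texttt{dim(std(I))} for the equality ideal and one Jacobian rank per case (Table~\ref{table:codimensions}). Your plan to run \texttt{minAssGTZ} on those ideals and then saturate by the products of the ``$\neq 0$'' polynomials is far heavier computationally and, even if it terminated, would still leave you needing a regularity certificate at a real point of each surviving component to convert complex dimensions into real ones.
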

\begin{proof}
In Table~\ref{table:codimensions} we give one point in each of the semi-varieties, such that if the definition 
of the semi-variety contains inequalities then the inequalities hold strictly.

In the same table, we include the codimension of the tangent 
space of the semi-variety at that point, $c_p$. To obtain it, we compute
the rank of the Jacobian matrix of the equations (equalities) defining the semi-variety at that point.
If the rank is maximum (the point is regular), then it coincides with the codimension of 
the variety at that point. (We set it to * if the point is not singular.)

Finally, $c_I$ denotes the (Krull) codimension of the defining ideal $I$ of the smallest variety cointaining the corresponding semi-variety  (i.e., considering the ideal generated only by the polynomials of the equalities). In symbols, $c_I = \mathrm{codim}(\mathcal{V}_\mathbb{C}(I)) := n - \mathrm{dim}(I)$, where $n$ is the number of indeterminates in the base ring (see Appendix A). By Theorem 3.9  $c_p \leq c_P$, where $P$ is a minimal prime of $I$ vanishing at $p$ and the equality holds if the point is regular. Therefore, since the dimension of $I$ is the maximum of the dimensions of its associated prime ideals, if $c$ denotes the (real) codimension of the variety, then $c_p \geq c\geq c_I$ at the regular points.

\begin{table}[ht]
\begin{tabular}{|c|c|c|c|}
\hline
Case & Point & $c_p$ & $c_I$ \\\hline
1a) & $a_1=1$, $a_2=0$, $a_3=1$, $a_4=-3$, $a_5=0$, $a_6=0$. & 4 & 4\\\hline
1b) & $a_1=1$, $a_2=1$, $a_3=-1$, $a_4=2$, $a_5=-4$, $a_6=1$. & 4 & 4\\\hline
2)  & \makecell{$a_1=-1$, $a_2=\sqrt{14}$, $a_3=-2$, \\
$a_4=-1$, $a_5=-3 \sqrt{14}$, $a_6=0$.} & 3 & 3 \\\hline
3a) & \makecell{$a_1=-1$, $a_2=(201 + 2 \sqrt{1509})/58$, \\
$a_3=(-33 + 4 \sqrt{1509})/58$, $a_4=-1$, \\
$a_5=-16$, $a_6=(-201 - 2 \sqrt{1509})/58)$} & 3 & 3 \\\hline
3b) & $a_1=0$, $a_2=0$, $a_3=0$, $a_4=1$, $a_5=-2$, $a_6=-1$. & 5 & 5\\\hline
4)  & $a_1=1$, $a_2=0$, $a_3=1/3$, $a_4=-1$, $a_5=-1$, $a_6=0$. & 3 & 3\\\hline
5a) & \makecell{$a_1=0$, $a_2=1$, $a_3=-15/16$,  \\ 
$a_4=-53/16$, $a_5=(-941 - 31 \sqrt{7913})/512$, $a_6=1$.} & 1 & 1\\\hline
5b) & \makecell{$a_1=0$, $a_2=(4096 - 7 \sqrt{1726})/16384$, $a_3=0$, \\
$a_4=-(58339673 + 28672 \sqrt{1726})/94666752$, \\
$a_5=-1$, $a_6=-2889/16384$.} & 2 & 2\\\hline
5c) & $a_1=1$, $a_2=4$, $a_3=-12$, $a_4=30$, $a_5=-15$, $a_6=1/2$ & * & 2\\\hline
5d) & \makecell{$a_1=0$, $a_2=\sqrt{185}/32$, $a_3=0$,\\ 
$a_4=-1$, $a_5=-3 \sqrt{185}/32$, $a_6=-5/32$} & 2 & 2\\\hline
5e) & $a_1=0$, $a_2=2 \sqrt{2}$, $a_3=-1$, $a_4=0$, $a_5=-9 \sqrt{2}$, $a_6=8$ & * & 2\\\hline
5f) & $a_1=0$, $a_2=2/3$, $a_3=0$, $a_4=-1$, $a_5=-2$, $a_6=-1/3$ & 3 & 2\\\hline
5g) & $a_1=0$, $a_2=1$, $a_3=-9/2$, $a_4=15/2$, $a_5=-15$, $a_6=8$ & * & 2\\\hline
5h) & $a_1=0$, $a_2=1$, $a_3=-8$, $a_4=35/2$, $a_5=-15$, $a_6=9/2$ & * & 2\\\hline
\end{tabular}
\caption{Codimensions of the semi-varieties.}
\label{table:codimensions}
\end{table}
\end{proof}

 
\subsection{Proof of Theorem \ref{teo:B} }
The trigonometric polynomial $B(\theta)$ has definite sign if and only 
if $q(x)\equiv 0$. I.e., the parameters belong to the variety defined
by the ideal obtained by equating the coefficients of $q(x)$ to zero:
\begin{align*}
2 a_1 a_2 + 4 a_3 + a_4=0,\\ 
12 a_2 + 3 a_5 - a_1 (6 a_3 + 2 a_4)=0,\\ 
8 a_3 + 3 a_4 + 4 a_6 + a_1 (6 a_2+2 a_5),\\ 
4 a_2 + a_5 - 2 a_1 a_6=0.
\end{align*}	
Computing the minimal associated prime ideals and a minimal set
of generators (see Appendix~\ref{ap:A}), we obtain three minimal ideals. 
But the first one contains the polynomial $a_1^2+4$, so that the associated variety
is empty. The other two prime ideals obtained are
\begin{equation*}
\langle a_1,4a_2+a_5,a_3-a_6,a_4+4a_6\rangle,
\end{equation*}	
and
\begin{equation*}
\langle a_6,3a_3+a_4,4a_2+a_5,3a_1a_5+2a_4\rangle.
\end{equation*}

\appendix

\section{SINGULAR codes}\label{ap:A}
\begin{verbatim}
// Proposition 4.1;
LIB "primdec.lib";
ring r = 0, (a1,a2,a3,a4,a5,a6,x), dp;
poly p1 = -a6*x^3 - 3*a2*x^2 - a5*x^2 + 3*a3*x + a4*x + a2;
poly p2 = -a2*x^3 + 2*a3*x^2 + a4*x^2 + a6*x^2 + 3*a2*x + a5*x - a3;
poly p3 = a1*p1-p2;
ideal i1 = coeffs(p1,x);
ideal i3 = coeffs(p3,x);
mres(i1,1)[1];
mres(i3,1)[1];
\end{verbatim}

\begin{verbatim}
// Proposition 4.3 Case 2;
// D1<0 implies R113!=0;
LIB "primdec.lib";
ring r = 0, (a1,a2,a3,a4,a5,a6,x), dp;
poly p1 = -a6*x^3-(3*a2+a5)*x^2+(3*a3+a4)*x+a2;
poly p2 = -a2*x^3+(2*a3+a4+a6)*x^2+(3*a2+a5)*x-a3;
poly p3 = a1*p1-p2;
ideal R = resultant(p1,p2,x);
poly R2 = minAssGTZ(R)[1][1];
poly dp1 = diff(p1,x);
ideal j1 = coeffs(p3,x)[4,1],coeffs(p3,x)[3,1], resultant(dp1,p3,x);
ideal j = R2, resultant(dp1,p1,x)+x^2, j1; 
j = sat(j,a6)[1];
list l = minAssGTZ(j); // Takes some time
reduce(x,std(l[1]));
reduce(coeffs(p3,x)[2,1],std(l[2]));
reduce(1+a1^2,std(l[3])); 
\end{verbatim}

\begin{verbatim}
// Proposition 4.3 Case 2;
// p1 with a root of multiplicity three;
LIB "primdec.lib";
ring r = 0, (a1,a2,a3,a4,a5,a6,x,a), dp;
poly p1 = -a6*x^3 - 3*a2*x^2 - a5*x^2 + 3*a3*x + a4*x + a2;
poly p2 = -a2*x^3 + 2*a3*x^2 + a4*x^2 + a6*x^2 + 3*a2*x + a5*x - a3;
poly p3 = a1*p1-p2;
poly R2 = minAssGTZ(resultant(p1,p2,x))[1][1];
poly p1d = p1 + a6*(x-a)^3;
ideal i3 = coeffs(p3,x);
poly p3l = i3[3]*x+i3[4];
ideal i1d = coeffs(p1d,x);
ideal i3d = coeffs(p3l,x);
ideal i13 = i1d,i3d,R2;
ideal ie=eliminate(i13,a);
list J=minAssGTZ(ie);
mres(J[3],1)[1];
\end{verbatim}

\begin{verbatim}
// Lemma 4.7;
LIB "primdec.lib";
ring r = 0, (a1,a2,a3,a4,a5,a6,x), dp;
poly p1 = -a6*x^3 - 3*a2*x^2 - a5*x^2 + 3*a3*x + a4*x + a2;
poly p2 = -a2*x^3 + 2*a3*x^2 + a4*x^2 + a6*x^2 + 3*a2*x + a5*x - a3;
poly R2 = minAssGTZ(resultant(p1,p2,x))[1][1];
ideal sR2 = R2,jacob(R2);
minAssGTZ(sR2); 
\end{verbatim}

\begin{verbatim}
// Proposition 4.8;
LIB "primdec.lib";
ring r = 0, (a1,a2,a3,a4,a5,a6,x,a,ap,b,d), dp;
poly p1 = -a6*x^3 - 3*a2*x^2 - a5*x^2 + 3*a3*x + a4*x + a2;
poly p2 = -a2*x^3 + 2*a3*x^2 + a4*x^2 + a6*x^2 + 3*a2*x + a5*x - a3;
poly p3 = a1*p1-p2;
poly R2 = minAssGTZ(resultant(p1,p2,x))[1][1];
poly p1d = p1 + a6*(x-a)*((x-b)^2-d^2);
poly p3d = p3 + (a1*a6-a2)*(x-ap)*((x-b)^2-d^2);
ideal i1d = coeffs(p1d,x);
ideal i3d = coeffs(p3d,x);
ideal i13 = i1d,i3d,R2;
ideal ie=eliminate(i13,a*ap*b*d);
list J=minAssGTZ(ie); 
\end{verbatim}

\begin{verbatim}
// Proposition 4.10;
LIB "primdec.lib";
ring r = 0, (a1,a2,a3,a4,a5,a6,x), dp;
poly p1 = -a6*x^3 - 3*a2*x^2 - a5*x^2 + 3*a3*x + a4*x + a2;
poly p2 = -a2*x^3 + 2*a3*x^2 + a4*x^2 + a6*x^2 + 3*a2*x + a5*x - a3;
poly p3 = a1*p1-p2;
poly dp1 = diff(p1,x);
poly dp3 = diff(p3,x);
poly ddp1 = diff(dp1,x);
poly ddp3 = diff(dp3,x);
poly D1 = resultant(p1,dp1,x);
poly D1p = resultant(p1,ddp1,x);
poly D3 = resultant(p3,dp3,x);
poly D3p = resultant(p3,ddp3,x);
poly R2 = minAssGTZ(resultant(p1,p2,x))[1][1];
ideal i1a = a6,a5,3*a3+a4,a2;
ideal i1b = a1*a6-a2,a1*a5-a3+a4+a6,a1*(3*a3+a4)-3*a2-a5,a1*a2+a3;
ideal i2  = a6,3*a2+a5,R2;
ideal i3a = a2-a1*a6,2*a3+a4+a1*(3*a2+a5)+a6,R2;
ideal i3b = 4*a4-9*a6,4*a3+5*a6,9*a2+a5,9*a1*a6+a5,8*a1^2-1;
ideal i4  = a2,a6,3*a3+a4;
ideal i5a = R2;
ideal i5b = R2,D1;
ideal i5c = R2,D1,D1p;
ideal i5d = R2,D3;
ideal i5e = R2,D3,D3p;
ideal i5f = R2,D1,D3;
ideal i5g = R2,D1,D1p,D3;
ideal i5h = R2,D1,D3p,D1;
nvars(basering) - dim(std(i1a));
nvars(basering) - dim(std(i1b));
nvars(basering) - dim(std(i2));
nvars(basering) - dim(std(i3a));
nvars(basering) - dim(std(i3b));
nvars(basering) - dim(std(i4));
nvars(basering) - dim(std(i5a));
nvars(basering) - dim(std(i5b));
nvars(basering) - dim(std(i5c));
nvars(basering) - dim(std(i5d));
nvars(basering) - dim(std(i5e)); // Takes some time;
nvars(basering) - dim(std(i5f));
nvars(basering) - dim(std(i5g));
nvars(basering) - dim(std(i5h)); // Takes some time; 
\end{verbatim}

\begin{verbatim}
// Theorem B
LIB "primdec.lib";
ring r = 0, (a1,a2,a3,a4,a5,a6), dp;
poly c0 = 2*a1*a2 + 4*a3 + a4; 
poly c1 = 12*a2 + 3*a5 - a1*(6*a3 + 2*a4); 
poly c2 = 8*a3 + 3*a4 + 4*a6 + a1*(6*a2+2*a5); 
poly c3 = 4*a2 + a5 - 2*a1*a6;
ideal iB = c0,c1,c2,c3;
list LB = minAssGTZ(iB);
mres(LB[1],1)[1];
mres(LB[2],1)[1];
mres(LB[3],1)[1];
\end{verbatim}

\section*{Acknowledgments}

The first two authors were partially supported by AEI/FEDER UE grant number
MTM 2011-22751 and Junta de Extremadura grant GR15055 (Junta de Extremadura/FEDER funds). The third author was partially supported by the research group FQM-024 (Junta de Extremadura/FEDER funds) and by the project MTM2015-65764-C3-1-P (MINECO/FEDER, UE).
The fourth author was partially supported by Junta de Extremadura grant GR15055 (Junta de Extremadura/FEDER funds).

\end{document}